\newtheorem{theorem}{Theorem}[section]
\newtheorem{lemma}[theorem]{Lemma}
\theoremstyle{definition}
\newtheorem{definition}[theorem]{Definition}
\newtheorem{example}[theorem]{Example}
\newtheorem{examples}[theorem]{Examples}
\newtheorem{corollary}[theorem]{Corollary}
\theoremstyle{remark}
\newtheorem{remark}[theorem]{Remark}
\theoremstyle{remarks}
\newtheorem{remarks}[theorem]{Remarks}
\newtheorem{proposition}[theorem]{Proposition}
\numberwithin{equation}{section}
\begin{document}
\title[  Products of Idempotents]{Products of Idempotents in Banach Algebras of Operators}

\author{S. K. Jain}
\address{S.K. Jain }  
\email{jain@ohio.edu \; Webpage address: sites.ohio.edu/jain} 

\author{Andre Leroy}
\address{Andr\'e Leroy}
\email{leroy55@gmail.com \;  Webpage address: http://leroy.perso.math.cnrs.fr/}

\author{Ajit Iqbal Singh}
\address{Ajit I. Singh INSA Emeritus Scientist, The Indian National Science Academy, Bahadur Shah Zafar Marg, New Delhi, 110 002, India}
\email{ajitis@gmail.com}
\subjclass[2020]{16S50, 16U40, 16W80, 46B28, 47B01}
%
%
\keywords{Idempotents, finite products, Banach algebra of operators, Block representation, Local block representation, Conditions for products of two idempotents}

\begin{abstract}
	Let $X$ be a Banach space and $\mathcal A$ be the Banach algebra $B(X)$ of bounded (i.e. continuous) linear transformations (to be called operators) on 
	$X$ to itself.  Let $\mathcal E$ be the set of idempotents in $\mathcal A$ and $\mathcal S$ be the semigroup generated by $\mathcal E$ under composition as multiplication.  If $T\in \mathcal S$ with $0\ne T\ne I_{X}$ then $T$ has a local block representation of the form 
	$\begin{pmatrix}
		T_1 & T_2 \\
		0 & 0
	\end{pmatrix}$ on $X=Y\oplus Z$,  a topological sum of non-zero closed subspaces $Y$ and $Z$ of $X$, and any $A\in \mathcal A$ has the form $\begin{pmatrix}
		A_1 & A_2 \\
		A_3 & A_4
	\end{pmatrix}$
	with $T_1,A_1 \in \mathcal B(Y)$, $T_2,A_2\in B(Z,Y), A_3\in B(Y,Z)$, and $A_4 \in \mathcal B(Z)$. The purpose of this paper is to study conditions for $T$ to be in $\mathcal{S}$. 
\end{abstract}

\maketitle




%



\section{Introduction}\label{sec:1}

This paper is addressed to a general mathematician interested in Algebra, Linear Algebra and Operator theory.
The problem of expressing a non-invertible (i.e. singular) function $f$ on a non-empty set $X$ to itself as a (finite) product of idempotent functions on $X$ to itself was started by Howie \cite{Ho} in 1960's. He proved it  for finite sets $X$ and studied the situation for infinite sets in detail. The fact that taking $X$ as a basis of a linear space $\tilde{X}$ with $dim(\tilde{X})= \#X$ and function $f$ on $X$ to $X$ giving rise to linear transformation $T$ on $\tilde{X}$ to itself led Erdos \cite{Er} to study the problem for rings of $n\times n$ matrices over fields 
Howie's student Dawlings \cite{Da} studied the question for Banach algebras of bounded (i.e. continuous) linear transformations (to be called operators) on a Hilbert space $\mathcal{H}$ to itself.  He also analyzed Banach spaces $X$ with (or without) Schauder basis, considering the (topological) dimension to be smallest cardinality of a dense subset of $X$, and therefore, the cardinality of a Schauder basis, if it exists.  Several authors studied this further like Wu \cite{Wu} , Arias, Corach, and Maestripieri \cite{ACM}. Arias, Corach, and Maestripieri \cite{ACM} concentrated on products of two idempotents of $\mathcal A$. We find that the technique of local block representation gives several new results related to the expression of operators on certain classes as finite product of idempotents in $\mathcal A$ in infinitely many ways. At times, we get a better deal when $X$ is a Hilbert space $\mathcal{H}$. This constitutes the subject matter of this paper.

The paper is divided in four sections. Section 2 is concerned with basics and notation. Section 3 deals with properties of the set $\mathcal E$ of idempotents in $\mathcal A$ and the semigroup $\mathcal S$ generated by $\mathcal{E}$ with composition as multiplication. This includes the basic concept of local block representation for members of $\mathcal S$, other than $0$ and $I_X$, which coincides with the set $\mathcal{F}$ of finite products of members of $\mathcal{E}$ as well, other than $0$ and $I_X$. Section 4 develops the technique further to obtain more classes of operators expressible as products of two idempotents in infinitely many ways.  




%


\section{Preliminaries and motivation}\label{sec:2}

This  section collects basics from standard books, monographs or relevant papers such as Howie \cite{Ho}, Erdos \cite{Er}, Dawlings \cite{Da}, Lindenstrauss and Tzafriri \cite{LT}, survey article by Jain and Leroy (SURVEY) \cite{JL}, Alahmadi, Jain, Leroy, and Sathaye \cite{AJLS}, and Douglas \cite{Do} with a tinge of novelty.

\subsection{Preliminaries}\label{subsec:1} 

\begin{enumerate}
	\item [(i)] Let $X$ be a complex Banach space with  $\lVert . \rVert$. In case $X$ is a Hilbert space $\mathcal{H}$ with inner product $<.,.>$ we will usually write $\mathcal{H}$ in place of $X$. 
	
	\begin{enumerate}[\rm (a)]
		\item 
		Let $\mathcal{A}$ be the Banach algebra $\mathcal{B}(X)$ of bounded (i.e., continuous) linear transformations on $X$ to itself (to be called operators in this paper) with point wise addition and scalar multiplication and composition as multiplication and for $T\in\mathcal{A}$, $\lVert T\rVert=\sup \{\lVert Tx\rVert:\lVert x\rVert\le 1\}$.
		
		Let $I_X$ be the identity operator on $X$ to itself.
		
		\item 
		Let $\mathcal{E}$ be the set of idempotents in $\mathcal{A}$ and $\mathcal{S}$ the semigroup generated by $\mathcal{E}$ with composition as multiplication. Then $\mathcal{S}$ coincides with the set of (finite) products of members of $\mathcal{E}$.
		
		We  denote the set of such products other than $0$ and $I_X$ by $\mathcal{F}$.
		
		\item 
		For Banach spaces $X_1$ and $X_2$, $\mathcal{B}(X_1,X_2)$ will denote the Banach space of bounded (i.e., continuous) linear transformations (to be called operators) on $X_1$ to $X_2$. 
	\end{enumerate}
	
Then $\mathcal{B}(X_1, X_2)$ is a left $\mathcal{B}(X_1)$ Banach module and right $\mathcal{B}(X_2)$ Banach module. 

We shall use the same symbol $0$ for different $0$'s in respective spaces. 
	
	\item[(ii)] Let $\widetilde{\mathcal{K}}$ be the set of closed (linear) subspaces of $X$. 
	\begin{enumerate}
		\item [(a)] Suppose ${\rm dim} X \ge 2$. We write $X=Y\oplus Z$, a topological sum to mean that  $Y,Z \in \widetilde{\mathcal{K}}$, $\{0\}\ne Y \ne X$, $\{0\}\ne Z \ne X$, $Y\cap Z=\{0\}, Y+Z=X,$ the idempotent surjective linear maps  $p_Y:X\to Y$ and $p_Z= I_X-p_Y:X \to Z$ given by $p_Y(x)=y$, $p_Z(x)=z$ for $x=y+z$ with $x\in X$, $y\in Y$, $z\in Z$ are both continuous (i.e., bounded). In this case, $Y$ will be said to be topologically complemented (or in short complemented) in $X$ and $Z$ will be called a topological complement (in short, a complement) of $Y$ in $X$. 
The inclusion maps $i_Y:Y\to X$ and $i_Z:Z\to X$ are in $\mathcal{B}(Y,X)$ and $\mathcal{B}(Z,X)$  respectively and indeed isometric and satisfy $p_Y \circ i_Y=I_Y$ and $p_Z \circ i_Z=I_Z$.
		
		\item[(b)] Situation in (a) permits us to treat $T\in \mathcal{A}$ as a block matrix 
		$$T=\begin{bmatrix}
			T_1&T_2\\
			T_3&T_4
		\end{bmatrix}$$ in $X=Y\oplus Z$, a topological sum with $T_1=p_Y\circ T \circ i_Y \in \mathcal{B}(Y)=\mathcal{A}_1$, say, $T_2=p_Y\circ T \circ i_Z \in \mathcal{B}(Z,Y)=\mathcal{A}_2$, say,
		$T_3= p_Z \circ T \circ i_Y\in \mathcal{B}(Y,Z)=\mathcal{A}_3$, say and $T_4=p_Z\circ T \circ i_Z \in \mathcal{B}(Z)=\mathcal{A}_4$, say. We call it a block representation of $T$ in $X=Y\oplus Z$. We just mention that it is an example of a Morita context \cite{Mo}. 
		
		Let $\widehat{\mathcal{K}}$ be the subset of $\widetilde{\mathcal{K}}$ consisting of $Y$'s that possess complements in $X$.
		
		\item [(c)]	If $X=\mathcal{H}$, a Hilbert space then every closed subspace $\mathcal{K}$ of $\mathcal{H}$ is complemented and we may even have its orthogonal complement $\mathcal{L}=\mathcal{K}^{\perp}$. We write $\mathcal{H}=\mathcal{K} \oplus^{\perp} \mathcal{L}$ or simply $\mathcal{H}=\mathcal{K} \oplus \mathcal{L}$.
		
		\item [(d)] An old result of Lindenstrauss and Tzafriri \cite{LT} says that a Banach space $X$ in which linear closed subspace $Y$ is topologically complemented is isomorphic to a Hilbert space $\mathcal H$.  But in spite of that we may really not worry about such uncomplemented subspaces in $\tilde{\mathcal K}$ in our paper because of the special basic properties of $\mathcal{E}$ and $\mathcal{F}$.
		
		\item [(e)] For a closed subspace $Y$ of $X$ with $\{0\}\ne Y\ne X$, the (topological) dimension of the quotient Banach space $X/Y$ will be called the codimension of $Y$ in $X$. If $Y\in \widetilde{\mathcal{K}}$ is finite dimensional or has finite codimension, then  $Y\in \widehat{\mathcal{K}}$. This gives us the next interspersing proposition.
	\end{enumerate}	
\end{enumerate}
		\begin{proposition}\label{tm:2.2}
			Suppose ${\rm dim}X\ge 2$. If $Y$ is a closed subspace of $X$ with $\{0\}\ne Y \ne X$, then there exist $Y_0$ and $\widehat{Y}\in\widehat{\mathcal{K}}$ that satisfy $$\{0\}\ne Y_0\subset Y\subset \widehat{Y}\ne X.$$ Further, if $Y\not\in \widehat{\mathcal{K}}$, then there exist infinitely many choices for $Y_0$ as well as $\hat{Y}$; indeed,  there are  a strictly increasing sequence $(Y_n)_{n\in \mathbb{N}}$ in $\hat{\mathcal{K}}$ and a strictly decreasing sequence $(\hat{Y}_n)_{n\in\mathbb{N}}$ in $\hat{\mathcal{K}}$ that satisfy for $n\in\mathbb{N}$, $Y_n\subset Y\subset \hat{Y}_n$, $Y_n$ is complemented in $Y_{n+1}$ and $\hat{Y}_{n+1}$ is complemented in $\hat{Y}_n$.  
		\end{proposition}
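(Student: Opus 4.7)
The backbone is clause (e): any finite-dimensional or finite-codimensional closed subspace of $X$ lies in $\widehat{\mathcal{K}}$. Combined with the Hahn-Banach theorem, this produces both assertions almost mechanically. For the base case I would pick a nonzero $y_1\in Y$ and take $Y_0=\mathbb{C}y_1$, which is one-dimensional and hence in $\widehat{\mathcal{K}}$. For $\widehat{Y}$, choose $x_0\in X\setminus Y$, use Hahn-Banach on the quotient $X/Y$ to produce $f\in X^{*}$ with $f|_Y=0$ and $f(x_0)\neq 0$, and set $\widehat{Y}=\ker f$: this is a closed hyperplane of codimension one containing $Y$, proper in $X$ since $f\neq 0$, and in $\widehat{\mathcal{K}}$ by (e).

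Now suppose $Y\notin\widehat{\mathcal{K}}$. Clause (e) then forces both $\dim Y=\infty$ and $\mathrm{codim}\,Y=\infty$, for otherwise $Y$ would already be complemented. The increasing sequence $(Y_n)$ is built by selecting a linearly independent sequence $y_1,y_2,\ldots\in Y$ and setting $Y_n=\mathrm{span}\{y_1,\ldots,y_n\}$: each $Y_n$ is finite-dimensional (hence in $\widehat{\mathcal{K}}$), strictly contained in $Y_{n+1}\subset Y$, and of codimension one in $Y_{n+1}$, so complemented in $Y_{n+1}$ by (e) applied inside $Y_{n+1}$.

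For the decreasing sequence $(\widehat{Y}_n)$ I would proceed inductively. Suppose functionals $f_1,\ldots,f_n\in X^{*}$ have been chosen, each vanishing on $Y$, so that $\widehat{Y}_n:=\bigcap_{i=1}^{n}\ker f_i$ has codimension exactly $n$ in $X$ and still properly contains $Y$. Since $\mathrm{codim}\,Y=\infty$, the quotient $\widehat{Y}_n/Y$ is nonzero, so I can pick $x_{n+1}\in\widehat{Y}_n\setminus Y$; Hahn-Banach supplies $f_{n+1}\in X^{*}$ with $f_{n+1}|_Y=0$ and $f_{n+1}(x_{n+1})\neq 0$, and I set $\widehat{Y}_{n+1}=\widehat{Y}_n\cap\ker f_{n+1}$. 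Then $\widehat{Y}_{n+1}\subsetneq\widehat{Y}_n$ because $x_{n+1}\in\widehat{Y}_n\setminus\widehat{Y}_{n+1}$; $\widehat{Y}_{n+1}$ has codimension one in $\widehat{Y}_n$ and codimension $n+1$ in $X$; and both complementation claims follow from (e). The only piece of bookkeeping is confirming the exact codimension count, which follows from linear independence of the images of $f_1,\ldots,f_n$ in $(X/Y)^{*}$, guaranteed because $f_i(x_j)=0$ for $j<i$ while $f_i(x_i)\neq 0$ by construction. There is no deep obstacle here; the argument is driven entirely by (e) and Hahn-Banach, and the infinite list of subspaces $Y_n$ and $\widehat{Y}_n$ in particular furnishes infinitely many admissible $Y_0$'s and $\widehat{Y}$'s.
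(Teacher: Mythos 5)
Your argument is correct and takes essentially the same route as the paper: finite-dimensional spans inside $Y$ for the increasing chain, an inductive Hahn--Banach construction of finite-codimensional subspaces containing $Y$ for the decreasing chain, and clause (e) supplying complementedness throughout (the paper phrases the induction as replacing $X$ by $\hat{Y}_n$, which amounts to your intersection of kernels). One immaterial slip: the construction guarantees $f_i(x_j)=0$ for $i<j$, not $j<i$, but the triangular argument for linear independence works either way, and indeed mere finiteness of the codimension of $\hat{Y}_n$ already suffices for (e).
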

		\begin{proof}
			It is enough to prove the last part. We have $Y$ is neither finite dimensional nor finite codimensional. We instantly have a desired sequence $(Y_n)_{n\in\mathbb{N}}$ of finite dimensional spaces contained in $Y$ which is strictly increasing. By Hahn Banach Theorem, there is a continuous linear functional $f\ne 0$ which is zero on $Y$. We let $\hat{Y}_1=\mathcal{N}(f)$, the nullspace of $f$. We can proceed by induction by taking $X=Y^n$ and obtain $Y^{n+1}$.
		\end{proof}

\begin{corollary}\label{cor:2.3}
	Let $X$ and $Y$ be as in Theorem \ref{tm:2.2} above. Then we have the following. 
	\begin{enumerate}[\rm (i)]
		\item 
		There exists a $0\ne A\in \mathcal{E}$, which is zero on $Y$.
		
		\item 
		There exist $0\ne T_0$ and $I_X\ne T_1\in \mathcal{E}$ with $\mathcal{R}(T_0)\subset Y\subset \mathcal{R}(T_1)$, where $\mathcal{R}$ stands for the range of an element of $\mathcal A$.
	\end{enumerate}
\end{corollary}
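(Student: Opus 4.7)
The plan is to derive both parts directly from Proposition \ref{tm:2.2}, using the fact that whenever a subspace is in $\widehat{\mathcal{K}}$, one obtains an idempotent in $\mathcal{E}$ as the projection onto (or along) it.

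First I would apply Proposition \ref{tm:2.2} to the given $Y$ to produce $Y_0, \widehat{Y} \in \widehat{\mathcal{K}}$ with $\{0\} \neq Y_0 \subseteq Y \subseteq \widehat{Y} \neq X$. For part (i), since $\widehat{Y} \in \widehat{\mathcal{K}}$ and $\widehat{Y}\neq X$, there is a closed subspace $W\neq\{0\}$ with $X=\widehat{Y}\oplus W$. I would then take $A=p_W=I_X-p_{\widehat{Y}}\in\mathcal{E}$. This $A$ is nonzero (because $W\neq\{0\}$) and vanishes on $\widehat{Y}$, hence on $Y$.

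For part (ii), I would split into two constructions. For $T_0$: since $Y_0 \in \widehat{\mathcal{K}}$ and $Y_0\neq\{0\}$, write $X=Y_0\oplus V$ and set $T_0=p_{Y_0}\in\mathcal{E}$. Then $T_0\neq 0$ and $\mathcal{R}(T_0)=Y_0\subseteq Y$. For $T_1$: using the same decomposition $X=\widehat{Y}\oplus W$ as above, set $T_1=p_{\widehat{Y}}\in\mathcal{E}$. Then $T_1\neq I_X$ (because $W\neq\{0\}$) and $\mathcal{R}(T_1)=\widehat{Y}\supseteq Y$.

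There is no real obstacle here: the corollary is essentially a restatement of Proposition \ref{tm:2.2} in terms of the idempotents associated with the decompositions guaranteed by that proposition. The only subtle point is ensuring that the ``nondegeneracy'' conditions $A\neq 0$, $T_0\neq 0$, $T_1\neq I_X$ are satisfied, which follow automatically from the strict inclusions $\{0\}\neq Y_0$ and $\widehat{Y}\neq X$ provided by Proposition \ref{tm:2.2}.
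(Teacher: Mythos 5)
Your proposal is correct and follows essentially the same route as the paper: taking $A=I_X-p_{\widehat{Y}}$ for (i) and $T_0=p_{Y_0}$, $T_1=p_{\widehat{Y}}$ for (ii), with the complements furnished by Proposition \ref{tm:2.2}. The only difference is that you spell out the nondegeneracy checks ($A\neq 0$, $T_0\neq 0$, $T_1\neq I_X$), which the paper leaves implicit.
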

\begin{proof}
	\begin{enumerate}[\rm (i)]
		\item 
		We may take $A=I_X-p_{\widehat{Y}}$, coming from some complement of $\widehat{Y}$ in $X$.
		
		\item 
		We may take $T_0=p_{Y_0}$ and $T_1=p_{\hat{Y}}$ coming from some complements of $Y_0$ and $\hat{Y}$ in $X$.
	\end{enumerate}
\end{proof}

\subsection{Basic properties of $\mathcal{E}$ and $\mathcal{F}$ and examples} Consider $p_Y$ as in \ref{subsec:1} (ii) (a) above. Then, $p_Y$ is an idempotent in $\mathcal{A}$, $\mathcal{R}(p_Y)=Y$ and $\mathcal{N}(p_Y)=Z=\mathcal{R}(I_X-p_Y)$. Geometrically speaking, $p_Y$ is a projection on $Y$ along $Z$. They constitute the base for Cartesian co-ordinate system. As already noted in Section \ref{sec:1}, Erdos \cite{Er} showed that they are the building blocks for singular linear transformations of $\mathbb{C}^n$ to itself. But it is not so for infinite-dimensional Banach spaces or even Hilbert spaces. The following lemma and remarks thereafter explain it to begin with.

\begin{lemma}
	\begin{enumerate}[\rm (i)]
		\item 
		For $Q\in \mathcal{E}$ with $0\neq Q\neq I_X$. $\mathcal{N}(Q)$, the null space of $Q$ and $\mathcal{R}(Q)$, the
		range of $Q$, are both closed subspaces of $X$ that are neither $\{0\}$ nor $X$. Indeed, $X=\mathcal{R}(Q)\oplus \mathcal{N}(Q)$, a topological sum.
		
		\item 
		Let $F\in\mathcal{F}$, other than $0$ and $I_X$, say $F=Q_1\cdots Q_p$ with $p\in\mathbb{N}$, $Q_j\in\mathcal{E}$, $0\ne Q_j\ne I_X$ for $1\le j\le p$. Then $\mathcal{N}(Q_p)\subset \mathcal{N}(F)$ and $\{0\}\ne\mathcal{R}(F)\subset \mathcal{R}(Q_1)$. Also, $0\ne \mathcal{N}(F)\ne \{0\}$ and $\overline{\mathcal{R}(F)}$, the closure of $\mathcal{R}(F)$, is $\ne X$.
		
		\item Let $\mathcal{K}_0=\mathcal{R}(T)$ and $\mathcal{K}_1$ its closure in $X$. 
		\begin{enumerate}[\rm (a)]
			\item 
			 Let $0\ne T\in\mathcal{A}$. Then $T$ has a non-zero left annihilator if and only if $T$ has a non-zero idempotent left annihilator if and only if $\mathcal{K}_1\ne X$.  
			
			\item 
			Moreover, $T$ has a non-zero right annihilator if and only if $T$ has a non-zero idempotent right annihilator if and only if $\mathcal{N}(T)\ne \{0\}$.
			
			\item 
			As a consequence, any $F\in\mathcal{F}$ has non-zero idempotent left and right annihilators.
		\end{enumerate}
		
		\item 
		For $X=\mathbb{C}^n$, $n\ge 2$, $T\in \mathcal{A}$, $\mathcal{N}(T)\ne \{0\}$ if and only if $T$ is singular, i.e., not invertible in $\mathcal{A}$ if and only if $\mathcal{R}(T)\ne X$.  Furthermore, $\mathcal{R}(T)$ is closed. Also, the concepts of $T$ having a non-zero left annihilator coincides with that of $T$ having a non-zero right annihilator. 
	\end{enumerate}
\end{lemma}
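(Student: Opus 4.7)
The plan is to handle the four parts in order, with part (i) and the earlier Corollary~\ref{cor:2.3} doing most of the work.

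For part (i), I would start from the key identity $\mathcal{R}(Q)=\mathcal{N}(I_X-Q)$, valid for any idempotent $Q$: any point in the range is fixed by $Q$ (if $x=Qy$ then $Qx=Q^2y=Qy=x$), and any fixed point is visibly in the range. This makes $\mathcal{R}(Q)$ closed by continuity, while $\mathcal{N}(Q)$ is closed as the preimage of $\{0\}$. The decomposition $X=\mathcal{R}(Q)\oplus\mathcal{N}(Q)$ comes from the identity $x=Qx+(x-Qx)$, together with triviality of intersection ($x=Qy$ and $Qx=0$ force $x=Qx=0$); the sum is topological because $Q$ itself is a bounded projection onto $\mathcal{R}(Q)$ with kernel $\mathcal{N}(Q)$. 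Nontriviality of the two subspaces is the only place a small case check is needed: each of $\mathcal{N}(Q)\in\{\{0\},X\}$ and $\mathcal{R}(Q)\in\{\{0\},X\}$ forces $Q=0$ or $Q=I_X$, contradicting the hypothesis.

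For part (ii), the set-theoretic inclusions $\mathcal{N}(Q_p)\subset\mathcal{N}(F)$ and $\mathcal{R}(F)\subset\mathcal{R}(Q_1)$ are immediate from the factorization $F=Q_1\cdots Q_p$. Applying part (i) to $Q_p$ gives $\mathcal{N}(Q_p)\ne\{0\}$, so $\mathcal{N}(F)\ne\{0\}$, while applying (i) to $Q_1$ gives $\mathcal{R}(Q_1)$ closed and $\ne X$, so $\overline{\mathcal{R}(F)}\subset\mathcal{R}(Q_1)\ne X$; the assumption $F\ne 0$ yields $\mathcal{R}(F)\ne\{0\}$.

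For part (iii), the nontrivial direction of (a) is where I would invoke Corollary~\ref{cor:2.3}(i) with $Y=\mathcal{K}_1$: if $\mathcal{K}_1\ne X$ it produces a nonzero idempotent $A$ vanishing on $\mathcal{K}_1\supset\mathcal{R}(T)$, so $AT=0$. Conversely, any $S\ne 0$ with $ST=0$ must vanish on $\mathcal{R}(T)$ and hence, by continuity, on $\mathcal{K}_1$; if $\mathcal{K}_1=X$ this forces $S=0$. For (b), if $\mathcal{N}(T)\ne\{0\}$ I would apply Corollary~\ref{cor:2.3}(ii) with $Y=\mathcal{N}(T)$ to get a nonzero idempotent $T_0$ with $\mathcal{R}(T_0)\subset\mathcal{N}(T)$, so $TT_0=0$; the converse is just $\mathcal{R}(S)\subset\mathcal{N}(T)$. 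Part (c) follows by combining (a), (b), and the information from (ii) that every $F\in\mathcal{F}$ satisfies $\overline{\mathcal{R}(F)}\ne X$ and $\mathcal{N}(F)\ne\{0\}$.

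Part (iv) reduces to standard finite-dimensional linear algebra: rank-nullity gives the equivalence of singularity, non-injectivity, and non-surjectivity; ranges of linear maps on $\mathbb{C}^n$ are automatically closed; and the coincidence of left and right annihilators follows since each is equivalent to singularity of $T$ via (a) and (b). I do not anticipate any real obstacle; the only delicate point is the continuity argument in the converse of (iii)(a), where one must extend vanishing on $\mathcal{R}(T)$ to vanishing on its closure, together with correctly selecting $Y$ in Corollary~\ref{cor:2.3}.
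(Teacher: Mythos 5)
Your proposal is correct and follows essentially the same route as the paper: parts (i) and (ii) by the standard projection facts the paper treats as immediate, part (iii) by invoking Corollary \ref{cor:2.3} (vanishing on $\mathcal{R}(T)$ extends to $\mathcal{K}_1$ by continuity, and the corollary supplies the nonzero idempotent annihilators), and part (iv) by finite-dimensional linear algebra. You simply fill in details (e.g.\ the explicit argument for (iii)(b) via Corollary \ref{cor:2.3}(ii)) that the paper leaves to the reader.
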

\begin{proof}
	Part (i) above and (ii) are immediate from discussion in this section. For (iii), we appeal to Corollary \ref{cor:2.3}. To give an idea, we elaborate for (iii) (a). Suppose $T$ has a non-zero left annihilator $A$ in $\mathcal{A}$. Then, $AT=0$. So, $A$ is zero on $\mathcal{K}_0$. But $A$ is continuous, so, $A$ is zero on $\mathcal{K}_1$. Because $A\ne 0$, we have $\mathcal{K}_1\ne X$. On the other hand, suppose $\mathcal{K}_1\ne X$. We know $\mathcal{K}_1\ne X$. We know $\mathcal{K}_1\ne \{0\}$. So, by Corollary 2.2 (i), there exists $0\ne B\in\mathcal{E}$, which is zero on $\mathcal{K}_1$. So, $B$ is a non-zero idempotent left annihilator of $T$.	
	
	Finally (iv) comes from basic Linear Algebra.
\end{proof}

\begin{remark}
	The statements in Lemma 2.3 (iv) above are not true for the space $\mathcal{H}=l_2=l_2(\mathbb{N})=\{x=(x_n)_{n\in\mathcal{N}},\sum_{n\in\mathbb{N}}|x_n|^2<\infty\}$ with inner product $<x,y>=\sum_{n\in\mathbb{N}}x_n\bar{y_n}$  for $x,y\in l_2(\mathbb{N})$ as shown below. The discussion can be adapted to Banach spaces $X=l_p$, $1\leq p\leq \infty$ as well.
	
	\begin{enumerate}[\rm (i)]
		\item 
		Let $T\in\mathcal{A}$ be defined by \[(Tx)_j=\begin{cases} 
			0, & j=1 \\
			x_{j-1} & j\geq 2
		\end{cases}.
		\]
		\begin{enumerate}[\rm (a)]
			\item 
			Then $\mathcal{N}(T)=\{0\}$ and $\mathcal{R}(T)=\{x\in l_2:x_1=0\}$, which is closed but $\ne\mathcal{H}$. So, $T$ has a non-zero left annihilator, viz., the orthogonal projection $P_1$ given by $P_1(x)=x_1e_1$, $x\in l_2$ and its non-zero scalar multiples as well. But $T$ has no non-zero right annihilators.
			
			\item 
			As a consequence, $T$ is not a finite product of idempotents in $\mathcal{A}$, i.e., $T\not\in\mathcal{F}$.
		\end{enumerate}
			\item 
				Let $T\in\mathcal{A}$ be defined by $(Tx)_j=x_{j+1}$, for $j\in\mathbb{N}$ and $x\in\mathcal{H}$.
			
			\begin{enumerate}[\rm (a)]
				\item 
				Then $Te_1=0$. So, $\mathcal{N}(T)\ne \{0\}$ and $T$ has a non-zero right annihilator viz., $P_1$ as in (i) (a) above together with its non-zero scalar multiples. But $\mathcal{R}(T)=\mathcal{H}$. So, $T$ has no non-zero left annihilators.
				
				\item 
				As a consequence, $T$ is not a finite product of idempotents in $\mathcal{A}$, i.e., $T\not\in \mathcal{F}$.
			\end{enumerate}
			
			\item 
				Let $T\in\mathcal{A}$ be given by $(Tx)_j=\frac{1}{j}x_j$ for $j\in\mathbb{N}$ and $x\in l_2$.
			
			\begin{enumerate}[\rm (a)]
				\item 
				Then $\mathcal{N}(T)=\{0\}$ and therefore, $T$ has no non-zero right annihilators. Moreover, for $j\in\mathbb{N}$, $e_j\in\mathcal{R}(T)$ simply because $T(je_j)=e_j$. So, $\mathcal{R}(T)$ is dense in $\mathcal{H}$, i.e., $\overline{\mathcal{R}(T)}=\mathcal{H}$. So, $T$ has no non-zero left annihilators either.
				
				\item 
				Further, $\mathcal{R}(T)\ne \mathcal{H}$ because $x=(x_j)_{j\in\mathbb{N}}$ given by $x_j=\frac{1}{j}$ is in $\mathcal{H}$ but not in $\mathcal{R}(T)$. This gives another instance of $T\in\mathcal{A}$ with $\mathcal{N}(T)=\{0\}$, but $\mathcal{R}(T)\ne \mathcal{H}$ in contrast to the situation in $\mathbb{C}^n$, $n\in\mathbb{N}$.
				
				\item 
				$T$ is not a regular element of $\mathcal{A}$. Indeed, let, if possible, $G\in\mathcal{A}$ be such that $TGT=T$. Then $T(GT-I_\mathcal{H})=0$. But $\mathcal{N}(T)=\{0\}$. So, $GT=I_\mathcal{H}$. So, for $j\in\mathbb{N}$, $je_j=GT(je_j)$. So, $\lVert G(e_j)\rVert=j$ whereas $\lVert e_j\rVert=1$. So, $G$ is not bounded, a contradiction.
				
				Hence $\mathcal{A}$ is not von Neumann regular. This demands individual study of properties of $\mathcal{F}$ as we cannot use the literature on von Neumann regular rings.
				
				We do that in the next section.
			\end{enumerate}
		\end{enumerate}
\end{remark}

	The equivalence of statements in the last part of (iii) (a) and (b) may not hold for all closed subalgebras of $\mathcal{A}=\mathcal{B}(X)$. 
\begin{examples}
	 Let $\mathfrak{C}$ be the Banach algebra $C[0,1]$ of continuous scalar valued functions on the closed interval $[0,1]$ with the sup norm. Let $f,g\in\mathcal{C}$ be given by $f(t)=2t-1$ for $0\leq t\leq \frac{1}{2}$ and $f(t)=0$ for $\frac{1}{2}\leq t\leq 1$, whereas $g(t)=0$ for  $0\leq t\leq \frac{1}{2}$ and $2t-1$ for $\frac{1}{2}\leq t\leq 1$. Then $fg=0$. Now, let $X=\mathfrak{C}^2$. Then $\mathcal{A}=\mathcal{B}(X)$ can be identified with $M_2(\mathcal{B}(\mathfrak{C}))$. We consider the closed subalgebra $\mathcal{A}_1=M_2(\mathfrak{M}(\mathfrak{C}))$, where $\mathfrak{M}(\mathfrak{C})$ the algebra of endomorphisms of $\mathfrak{C}$ identified with $\mathfrak{C}$ itself. Then $T=\begin{bmatrix}
		1 & 0\\
		0 & f
	\end{bmatrix}$ has $A=\begin{bmatrix}
		0 & 0\\
		0 & g
	\end{bmatrix}$ as a non-zero left annihilator but $T$ has no non-zero idempotent left annihilator. To elaborate, the left annihilators of $T$ in $\mathcal{A}_1$ are given by $\begin{bmatrix}
		0 & b\\
		0 & d
	\end{bmatrix}$ with $bf=0=df$. But $\begin{bmatrix}
		0 & b\\
		0 & d
	\end{bmatrix}$ is an idempotent if and only if $bd=b$ and $d^2=d$. The only idempotents in $\mathfrak{C}$ are $0$ and $1$. Now, $d=0$ forces $b=0$ whereas $d=1$ does not satisfy $df=0$. So, $T$ has no non-zero idempotent left annihilators in $\mathcal{A}_1$.
\end{examples}

\begin{examples}	
	\begin{enumerate}[\rm (i)]
		\item 
		We consider the algebra $M_2(\mathbb{R})$ and express some nonzero singular matrices $T$ in $M_2(\mathbb{R})$ as finite products of idempotents on the right.
		
		\begin{enumerate}[\rm (a)]
			\item 
			For $0\ne a\in\mathbb{R}$, $$\begin{bmatrix}
				a & 0\\
				0 & 0
			\end{bmatrix}=\begin{bmatrix}
			1 & a-1\\
			0 & 0
			\end{bmatrix}\begin{bmatrix}
			1 & 0\\
			1 & 0
			\end{bmatrix}.$$
			
			\item 
			For $0\ne b,\ 0\ne c\in\mathbb{R}$,
			$$\begin{bmatrix}
				bc & 0\\
				0 & 0
			\end{bmatrix}=\begin{bmatrix}
				1 & b\\
				0 & 0
			\end{bmatrix}\begin{bmatrix}
				0 & 0\\
				0 & 1
			\end{bmatrix}\begin{bmatrix}
			1 & 0\\
			c & 0
			\end{bmatrix}.$$
			
			\item 
			For $0\ne a,\ b\in\mathbb{R}$
			$$\begin{bmatrix}
				a & ab\\
				0 & 0
			\end{bmatrix}=\begin{bmatrix}
				1 & a-1+b\\
				0 & 0
			\end{bmatrix}\begin{bmatrix}
				1-b & b(1-b)\\
				1 & b
			\end{bmatrix}.$$
		\end{enumerate}
		
		\item 
		For any ring $R$ with identity 1 and $n=2r$, $r\in\mathbb{N}$, $\mathbb{R}$ in (i) above can be replaced by $M_r(R)$ giving us factorizations in analogy with (i).
		
		\item 
		For any ring $R$ with identity 1, $n\in\mathbb{N}$, $n>2r,r$ in $\mathbb{N}$, any $a\in M_{r}(R)$ can be written as $bc$ with $b\in M_{r,n-r}(R)$ and $c\in M_{n-r,r}(R)$ such as $b=\begin{bmatrix}
			a & 0
		\end{bmatrix}$, $c=\begin{bmatrix}
		I_r\\0
		\end{bmatrix}$. So, we can have an analogue to (i)(b) by replacing 1 in the first row by $I_r$ and second row by $I_{n-r}$.
		
		\item 
		For a Hilbert space $\mathcal{H}=\mathcal{K}\oplus  \mathcal{L}$ with $\mathcal{K}\perp \mathcal{L}$, ${\rm dim}\mathcal{K}\leq {\rm dim}\mathcal{L}$, we can have a $\mathcal{J}:\mathcal{K}\rightarrow \mathcal{L}_1\subset \mathcal{L}$ an isometric isomorphism with $\mathcal{L}_1$ a closed subspace of $\mathcal{L}$. Then $\mathcal{J}^*\mathcal{J}=I_\mathcal{K}$. So, we may have the following analogue of (i).
		\begin{enumerate}[\rm (a)]
			\item 
			$$\begin{bmatrix}
				a & 0\\
				0 & 0
			\end{bmatrix}=\begin{bmatrix}
				I_\mathcal{K} & (a-I_\mathcal{K})\mathcal{J}^*\\
				0 & 0
			\end{bmatrix}\begin{bmatrix}
				I_\mathcal{K} & 0\\
				\mathcal{J} & 0
			\end{bmatrix},$$ for $a\in \mathcal{B}(\mathcal{K})$.
			
			\item 
			For $0\ne b\in\mathcal{B}(\mathcal{L,K})$, $0\ne c\in\mathcal{B}(\mathcal{K},\mathcal{L})$,
			$$\begin{bmatrix}
				bc & 0\\
				0 & 0
			\end{bmatrix}=\begin{bmatrix}
				I_\mathcal{K} & b\\
				0 & 0
			\end{bmatrix}\begin{bmatrix}
			0 & 0\\
			0 & I_\mathcal{L}
			\end{bmatrix}\begin{bmatrix}
				I_\mathcal{K} & 0\\
				c & 0
			\end{bmatrix}.$$
			
			\item 
			For $0\ne a\in\mathcal{B}(\mathcal{K}), b\in\mathcal{B}(\mathcal{L}, \mathcal{K})$, 
			$$\begin{bmatrix}
				a & ab\\
				0 & 0
			\end{bmatrix}=\begin{bmatrix}
				I_\mathcal{K} & (a-1)\mathcal{J}^*+b\\
				0 & 0
			\end{bmatrix}\begin{bmatrix}
				I_\mathcal{K}-b\mathcal{J} & b-b\mathcal{J}b\\
				\mathcal{J} & \mathcal{J}b
			\end{bmatrix}.$$
		\end{enumerate}
		
		\item 
		Indeed, in Corollary 3.11 in \cite{ACM}, the conditions given there on $T$ force $T$ to satisfy $\mathcal{R}(T)^\perp=\mathcal{N}(T)=\mathcal{R}(T^*)^\perp=\mathcal{N}(T)$ and $T=0$ on $\mathcal{R}(T)^\perp$ and therefore, have the form $\begin{bmatrix}
		T_1 & 0\\
		0 & 0
		\end{bmatrix}$ with $T_1$ invertible in $\mathcal{B}(\mathcal{K})$. Then (ii)(a) above renders an easy proof of a generalization of Corollary 3.11 \cite{ACM}.
	\end{enumerate}
\end{examples}

We give a useful factorization theorem, due to Douglas \cite[Theorem 1 and the last paragraph]{Do}.   It will be referred to as Theorem D.

\begin{theorem}
	\label{rem:4}
	Let $X_1, X_2, X_3$ be Banach spaces and $U\in\mathcal{B}(X_1,X_3)$, $V\in\mathcal{B}(X_2,X_3)$. Then $\mathcal{R}(U)\subset \mathcal{R}(V)$ if and only if $U=VW$ for some $W\in\mathcal{B}(X_1,X_2)$. In this case, there exists a unique $W_0\in\mathcal{B}(X_1,X_2)$ that satisfies $U=VW_0$ and $\mathcal{N}(U)=\mathcal{N}(W_0)$.
\end{theorem}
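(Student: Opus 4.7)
The plan is to handle the two implications separately and then establish uniqueness. The direction $(U = VW) \Rightarrow (\mathcal{R}(U) \subset \mathcal{R}(V))$ is immediate: every $Ux = V(Wx)$ lies in $\mathcal{R}(V)$. All the work is in the converse and the uniqueness statement.

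For the converse, assume $\mathcal{R}(U)\subset\mathcal{R}(V)$. I would first pass to the injectivization of $V$: let $\pi:X_2\to X_2/\mathcal{N}(V)$ be the quotient map, and let $\tilde V:X_2/\mathcal{N}(V)\to X_3$ be the induced bounded linear map defined by $\tilde V([y])=Vy$. By construction $\tilde V$ is injective and $\mathcal{R}(\tilde V)=\mathcal{R}(V)$. Since $\mathcal{R}(U)\subset\mathcal{R}(\tilde V)$ and $\tilde V$ is injective, for each $x\in X_1$ there is a \emph{unique} element $\xi(x)\in X_2/\mathcal{N}(V)$ with $\tilde V(\xi(x))=Ux$. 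The map $x\mapsto\xi(x)$ is clearly linear.

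Next I would show $\xi:X_1\to X_2/\mathcal{N}(V)$ is bounded via the closed graph theorem. Suppose $x_n\to x$ in $X_1$ and $\xi(x_n)\to\zeta$ in $X_2/\mathcal{N}(V)$. By continuity of $U$ and $\tilde V$, the sequence $\tilde V(\xi(x_n))=Ux_n$ converges both to $Ux=\tilde V(\xi(x))$ and to $\tilde V(\zeta)$; injectivity of $\tilde V$ forces $\xi(x)=\zeta$. Hence $\xi\in\mathcal{B}(X_1,X_2/\mathcal{N}(V))$. To lift $\xi$ to a map $W_0\in\mathcal{B}(X_1,X_2)$ with $\pi\circ W_0=\xi$, I would choose a bounded section $\sigma:X_2/\mathcal{N}(V)\to X_2$ of $\pi$ (available because $\mathcal{N}(V)$ will be complemented in the setting used for applications — in particular in Hilbert space via the orthogonal complement $\mathcal{N}(V)^\perp$, which is Douglas's original setting) and set $W_0=\sigma\circ\xi$. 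Then $VW_0=\tilde V\circ\pi\circ\sigma\circ\xi=\tilde V\circ\xi=U$, as required.

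For the uniqueness and the null-space identity, observe that by the injectivity of $\tilde V$ we have $Ux=0\Leftrightarrow\tilde V(\xi(x))=0\Leftrightarrow\xi(x)=0$, so $\mathcal{N}(\xi)=\mathcal{N}(U)$. Any $W_0\in\mathcal{B}(X_1,X_2)$ satisfying $VW_0=U$ must verify $\pi\circ W_0=\xi$ (because $\tilde V\circ(\pi\circ W_0)=V W_0=U=\tilde V\circ\xi$ and $\tilde V$ is injective); imposing additionally $\mathcal{N}(W_0)=\mathcal{N}(U)=\mathcal{N}(\xi)$ pins $W_0$ down on each fiber of $\xi$, yielding uniqueness.

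The main obstacle I expect is the lifting step from $X_2/\mathcal{N}(V)$ back to $X_2$: the closed graph argument handles the quotient-valued map painlessly, but producing a bounded $X_2$-valued lift relies on the existence of a bounded section of $\pi$, i.e., on $\mathcal{N}(V)$ being topologically complemented. In Douglas's original Hilbert space formulation this is automatic via orthogonality; in the Banach setting indicated by the paper the complementation is part of the hypotheses one implicitly uses, and all the remaining ingredients (closed graph theorem, injectivity of $\tilde V$) are standard.
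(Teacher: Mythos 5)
The paper never proves this statement: it is quoted as ``Theorem D'' with a citation to Douglas (Theorem 1 and the closing paragraph of \cite{Do}), so there is no internal argument to compare yours with; your quotient/closed-graph construction is indeed the standard route behind Douglas's theorem. But as a proof of the statement as printed, your write-up has a genuine gap at exactly the point you flag, and it cannot be papered over: a bounded section $\sigma$ of $\pi:X_2\to X_2/\mathcal{N}(V)$ exists precisely when $\mathcal{N}(V)$ is topologically complemented in $X_2$, and this is not among the hypotheses. Moreover, in the stated Banach-space generality the existence assertion itself fails, so no repair of the lifting step is possible: take $V:\ell^1\to c_0$ a bounded surjection (every separable space is a quotient of $\ell^1$) and $U=I_{c_0}$; then $\mathcal{R}(U)\subset\mathcal{R}(V)$, but $U=VW$ would force $W$ to be an isomorphic embedding of $c_0$ into $\ell^1$, which is impossible because every subspace of $\ell^1$ has the Schur property and $c_0$ does not. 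So the ``only if'' direction needs an extra hypothesis (Hilbert space, or $\mathcal{N}(V)$ complemented); the complementation you describe as ``implicitly used'' is the actual mathematical content and must be an assumption, not an aside.

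The uniqueness part of your argument is also not valid, and the uniqueness claim in the form you (and the paper) state it is false even for Hilbert spaces: knowing $\pi\circ W_0=\xi$ and $\mathcal{N}(W_0)=\mathcal{N}(\xi)$ does not ``pin $W_0$ down on each fiber.'' For example, on $\mathbb{C}^2$ let $U=V$ be the idempotent $(y_1,y_2)\mapsto(y_1,0)$; then every $W_c(x_1,x_2)=(x_1,cx_1)$ satisfies $VW_c=U$ and $\mathcal{N}(W_c)=\mathcal{N}(U)$, so there are infinitely many solutions with the correct kernel. In Douglas's actual theorem uniqueness comes from the additional requirement $\mathcal{R}(W_0)\subset\overline{\mathcal{R}(V^*)}=\mathcal{N}(V)^{\perp}$ (in the Banach setting: $W_0$ takes values in the chosen complement of $\mathcal{N}(V)$), and with that condition uniqueness is immediate, since two such solutions differ by an operator whose range lies in $\mathcal{N}(V)$ intersected with its complement, hence is $0$. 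So the correct completion of your argument is to define $W_0=\sigma\circ\xi$ with $\sigma$ the embedding of the complement and to prove uniqueness within that class, rather than among all $W$ with $\mathcal{N}(W)=\mathcal{N}(U)$; the kernel equality $\mathcal{N}(W_0)=\mathcal{N}(U)$ then follows from injectivity of $\sigma$ exactly as in your computation of $\mathcal{N}(\xi)$.
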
		


\section{Properties of idempotents in $\mathcal{A}$ and their products}

\subsection{Basic concepts and set-up}\label{subsec:2} 

Let us first consider $0\ne T\in \mathcal{A}$ that has a nonzero left annihilator as well as a non-zero right annihilator. The subject of this paper is to see if we can decompose such an operator $T\in \mathcal{A}$ into (finite) product of idempotents. 
We start with the following rewording of a part of Lemma 2.3 for use here.
\begin{lemma}
	Let $0\ne T\in \mathcal{A}$ have a non-zero left annihilator and denote by $\mathcal{K}_0$ its range and $\mathcal{K}_1$, the closure of $\mathcal{K}_0$. Then there exists $\mathcal{K}$ in $\hat{\mathcal{K}}$ that satisfies $\{0\}\ne \mathcal{K}_0\subset \mathcal{K}_1\subset \mathcal{K} \subsetneq X$.  
\end{lemma}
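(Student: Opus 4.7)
The plan is to combine the previously established Lemma 2.3 (iii)(a) with Proposition 2.2 applied to the closure of the range of $T$. First, since $T\ne 0$, the range $\mathcal{K}_0$ is a non-zero linear subspace of $X$, so $\mathcal{K}_1=\overline{\mathcal{K}_0}$ is a non-zero closed subspace. Next, by Lemma 2.3 (iii)(a), the hypothesis that $T$ has a non-zero left annihilator is equivalent to $\mathcal{K}_1 \ne X$. Hence $\mathcal{K}_1$ is a closed subspace of $X$ with $\{0\}\ne \mathcal{K}_1\ne X$, which places it in the setting of Proposition 2.2.

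Applying Proposition 2.2 with the role of $Y$ played by $\mathcal{K}_1$, one obtains some $\widehat{Y}\in\widehat{\mathcal{K}}$ satisfying $\mathcal{K}_1\subset \widehat{Y}\ne X$. Setting $\mathcal{K}=\widehat{Y}$ yields a complemented closed subspace with the desired properties, namely $\{0\}\ne \mathcal{K}_0\subset \mathcal{K}_1\subset \mathcal{K}\subsetneq X$. The inclusions $\mathcal{K}_0\subset \mathcal{K}_1$ are tautological (definition of closure), and $\mathcal{K}\subsetneq X$ comes directly from the conclusion of Proposition 2.2.

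There is really no obstacle here: the statement is a repackaging of what was already proved in Section 2. The only point requiring a word of care is observing that Proposition 2.2 is applicable in full generality, including the case where $\mathcal{K}_1$ itself happens to be uncomplemented in $X$; but that proposition is explicitly designed to cover exactly this situation, producing a complemented enlargement $\widehat{Y}$ regardless of whether $\mathcal{K}_1$ belongs to $\widehat{\mathcal{K}}$. No appeal to the finer sequential statement of Proposition 2.2 is needed — the first, non-sequential assertion already suffices.
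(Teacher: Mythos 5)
Your proposal is correct and follows essentially the same route as the paper, which presents this lemma as a rewording of Lemma 2.3 (iii)(a) (non-zero left annihilator $\Leftrightarrow$ $\mathcal{K}_1\ne X$, via continuity of the annihilator) combined with the first assertion of the proposition in Section 2 applied to $Y=\mathcal{K}_1$ to produce a complemented $\mathcal{K}\supset\mathcal{K}_1$ with $\mathcal{K}\subsetneq X$. The only cosmetic point is the numbering of that proposition in your citation; the argument itself matches the paper's.
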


This motivates us to work in the scenario set up in the previous section especially 2.1 and 2.2. We shall write $\mathcal{K}$ in place of $Y$ and $\mathcal{L}$ in place of $Z$ with the understanding that $\mathcal{L}$ will be an arbitrary but then fixed complement of $\mathcal{K}$ in $X$, and that in case $X =\mathcal{H}$, a Hilbert space, $\mathcal{L}=\mathcal{K}^\perp$ as indicated in 2.1 (ii) (c) above.

  Let us first make some observations.

\begin{lemma}
\label{classification of idempotent according to range}
Let $T=\begin{pmatrix}
	T_1 & T_2 \\
	T_3 & T_4
\end{pmatrix}$ be an operator in its block representation according to a decomposition $X=\mathcal{K}\oplus \mathcal{L}$. 
\begin{enumerate}[\rm (i)]
	\item T is an idempotent if and only if $T_1^2 + T_2T_3 = T_1, T_1T_2 +
	T_2T_4 = T_2, T_3T_1 + T_4T_3 = T_3$, and $T_3T_2 + T^2_4 = T_4$.
	\item 
	Let $T$ be an idempotent with $\mathcal{K}\subseteq \mathcal{R}(T)$.  Then 
		$T_1=I_\mathcal{K}$.
			$T_2T_3=0, T_2T_4=0, T_4T_3=0$, and
			$T_3T_2=T_4-T_4^2$.
	\item 
	\begin{enumerate}[\rm (a)]
		\item 
		$\mathcal{R}(T)\subseteq \mathcal{K}$ if and only if $T_3=0$ and $T_4=0$.
		
		\item 
		In case (a) happens, then:\\
\begin{itemize}
	\item $\mathcal{R}(T)=\mathcal{R}(T_1)+\mathcal{R}(T_2).$
	
	\item 
	$\mathcal{N}(T)= (\mathcal{N}(T_1)\oplus\mathcal{N}(T_2))+\{k+l:k\in\mathcal{K},l\in\mathcal{L},(k,l)=(0,0)\mbox{ or }T_1k=-T_2l\ne 0\}.$	
\end{itemize}		
	\end{enumerate}
	\item 
	 $T$ is an idempotent with $\mathcal{R}(T)\subset\mathcal{K}$ if and only if it is of the form 
	$\begin{pmatrix}
		T_1 & T_2 \\
		0 & 0
	\end{pmatrix}$ with 
	\begin{enumerate}[\rm (a)]
		\item 
		$T_1^2=T_1$ and
		
		\item 
		$T_1T_2=T_2$.
	\end{enumerate}
	  Under (a), the condition (b) can be replaced by any of the following:
	  \begin{enumerate}
	  	\item[\rm ($b^\prime$)] $T_2=T_1 B$ for some $B \in \mathcal{B}(\mathcal{L},\mathcal{K})$.
	  	
	  	\item[\rm ($b^{\prime\prime}$)] $\mathcal{R}(T_2)\subset \mathcal{R}(T_1)$.
	  	
	  	\item[\rm ($b^{\prime\prime\prime}$)]  $\mathcal{R}(T)=\mathcal{R}(T_1)$.
	  \end{enumerate}
	  
	\item  $T$ is an idempotent with $\mathcal{R}(T)= \mathcal{K}$ if and only if $T_3=0,T_4=0,$and $T_1=I_\mathcal{K}$, i.e., $T=\begin{bmatrix}
		I_\mathcal{K}&T_2\\
		0&0
	\end{bmatrix}$.
\end{enumerate}
\end{lemma}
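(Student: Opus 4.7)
The plan is to prove (i) by a direct block-matrix expansion of $T^{2}=T$, and then to extract (ii)--(v) from (i) combined with the basic observation that, writing $x=k+l$ with $k\in\mathcal{K}$ and $l\in\mathcal{L}$, one has $Tx=(T_{1}k+T_{2}l)+(T_{3}k+T_{4}l)$ with the two summands in $\mathcal{K}$ and $\mathcal{L}$ respectively. Part (i) is immediate: squaring the block matrix gives $T^{2}=\begin{pmatrix} T_{1}^{2}+T_{2}T_{3} & T_{1}T_{2}+T_{2}T_{4} \\ T_{3}T_{1}+T_{4}T_{3} & T_{3}T_{2}+T_{4}^{2} \end{pmatrix}$, and the four stated identities are exactly the four entries of $T^{2}-T=0$.

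For (iii)(a), the inclusion $\mathcal{R}(T)\subseteq\mathcal{K}$ is equivalent to the vanishing of the $\mathcal{L}$-component $T_{3}k+T_{4}l$ for every $(k,l)$, i.e.\ $T_{3}=0$ and $T_{4}=0$; under this assumption $T(k+l)=T_{1}k+T_{2}l$, so $\mathcal{R}(T)=\mathcal{R}(T_{1})+\mathcal{R}(T_{2})$ and $(k,l)\in\mathcal{N}(T)$ iff $T_{1}k=-T_{2}l$, which splits as asserted in (iii)(b) according to whether this common value vanishes. For (ii), if $\mathcal{K}\subseteq\mathcal{R}(T)$ then every $k\in\mathcal{K}$ is a fixed point of the idempotent $T$; applying $T(k,0)=(k,0)$ yields $T_{1}k=k$ (so $T_{1}=I_{\mathcal{K}}$, and incidentally $T_{3}=0$ on $\mathcal{K}$), and substituting $T_{1}=I_{\mathcal{K}}$ into the identities of (i) immediately gives $T_{2}T_{3}=0$, $T_{2}T_{4}=0$, $T_{4}T_{3}=0$, and $T_{3}T_{2}=T_{4}-T_{4}^{2}$.

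For (iv), the forward direction combines (iii)(a) with (i): once $T_{3}=T_{4}=0$, the third and fourth identities of (i) are automatic and the first two reduce to (a) $T_{1}^{2}=T_{1}$ and (b) $T_{1}T_{2}=T_{2}$, while the converse is a one-line block-matrix verification. For the equivalence of (b), $(b')$, $(b'')$, $(b''')$ under (a), I would run the cycle (b)$\Rightarrow(b')$ by taking $B=T_{2}$; $(b')\Rightarrow(b'')$ trivially; $(b'')\Rightarrow(b''')$ via $\mathcal{R}(T)=\mathcal{R}(T_{1})+\mathcal{R}(T_{2})\subseteq\mathcal{R}(T_{1})\subseteq\mathcal{R}(T)$; and $(b''')\Rightarrow$(b) by using that the idempotent $T_{1}$ (granted by (a)) restricts to the identity on $\mathcal{R}(T_{1})=\mathcal{R}(T)\supseteq\mathcal{R}(T_{2})$, so $T_{1}T_{2}l=T_{2}l$ for every $l\in\mathcal{L}$. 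The alternative route $(b'')\Rightarrow(b')$ via Theorem D is also available and bypasses idempotency of $T_{1}$. Finally (v) is the intersection of (ii) and (iii)(a): $\mathcal{R}(T)=\mathcal{K}$ simultaneously forces $T_{3}=T_{4}=0$ and $T_{1}=I_{\mathcal{K}}$, and the converse is immediate from the block form. The whole lemma is essentially bookkeeping driven by (i); the only step that is not a one-liner is $(b''')\Rightarrow$(b), where the idempotency of $T_{1}$ must be used in an essential way to upgrade a range inclusion to the pointwise identity $T_{1}T_{2}=T_{2}$.
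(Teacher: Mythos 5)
Your proposal is correct and follows essentially the same route as the paper: part (i) by expanding $T^2=T$ blockwise, and (ii)--(v) by combining those four identities with the fixed-point property of idempotents on their range (which is exactly how the paper gets $T_1=I_{\mathcal K}$ in (ii)) and the componentwise description of $\mathcal R(T)$ and $\mathcal N(T)$ for (iii) and (v). The only deviation is cosmetic: in (iv) you close the cycle $(b)\Rightarrow(b')\Rightarrow(b'')\Rightarrow(b''')\Rightarrow(b)$ using idempotency of $T_1$, so Theorem D (which the paper invokes for $(b'')\Rightarrow(b')$) is needed only as an optional alternative, a slightly more self-contained variant of the same bookkeeping.
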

\begin{proof}
(i) Direct computations give that $T$ is an idempotent if and only if 	 
	$T_1^2+T_2T_3=T_1, T_1T_2+T_2T_4=T_2,
	T_3T_1+T_4T_3=T_3, \mbox{and}\
	T_3T_2+T_4^2=T_4.$

 (ii) Since $K\subseteq \mathcal{R}(T)$ and $T^2=T$, we immediately obtain that $T_1=I_{K}$ and hence thanks to the equations above, the other equalities in (ii) follow. 

(iii) is clear.

(iv) Follows from (ii) and Theorem D.
\end{proof}
Let $\mathcal{E}_\mathcal{K}$ be the set of idempotents in $\mathcal{A}$ with range $\mathcal{K}$ and $\widetilde{\mathcal{E}_\mathcal{K}}$, the set of idempotents $Q$ in $\mathcal{A}$ with range $\mathcal{R}(Q)\subset \mathcal{K}$.

With this notation we easily get the following theorem.

\begin{theorem}
\label{tm:1}
\begin{enumerate}[\rm (i)]
		\item 
		$\mathcal{E}_\mathcal{K}$ is a non-commutative semigroup.
		
		\item 
		$\widetilde{\mathcal{E}_\mathcal{K}}$ is left as well as right $\mathcal{E}_\mathcal{K}$-module.
		
		
%
	\end{enumerate}
\end{theorem}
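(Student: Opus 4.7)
The plan is to exploit the explicit block normal forms given in parts (iv) and (v) of Lemma \ref{classification of idempotent according to range}. By part (v), every $P \in \mathcal{E}_\mathcal{K}$ has the block form
\[
P = \begin{pmatrix} I_\mathcal{K} & P_2 \\ 0 & 0 \end{pmatrix}, \qquad P_2 \in \mathcal{B}(\mathcal{L},\mathcal{K}),
\]
and by part (iv), every $R \in \widetilde{\mathcal{E}_\mathcal{K}}$ has the form
\[
R = \begin{pmatrix} R_1 & R_2 \\ 0 & 0 \end{pmatrix}, \qquad R_1^2 = R_1, \quad R_1 R_2 = R_2.
\]

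First I would handle (i). Given two elements $P, Q \in \mathcal{E}_\mathcal{K}$ with second blocks $P_2, Q_2$, a direct block multiplication collapses the upper-left identity and yields
\[
PQ = \begin{pmatrix} I_\mathcal{K} & Q_2 \\ 0 & 0 \end{pmatrix} = Q.
\]
This simultaneously proves closure (so $\mathcal{E}_\mathcal{K}$ is a subsemigroup of $\mathcal{A}$) and, by the symmetric identity $QP = P$, shows $PQ \neq QP$ whenever $P \neq Q$. Provided $\mathcal{K}$ and $\mathcal{L}$ are both non-zero, $\mathcal{B}(\mathcal{L},\mathcal{K})$ contains more than one element, so the semigroup is genuinely non-commutative.

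Next I would turn to (ii). The left action is immediate: $PR = R$ because the identity block of $P$ leaves the top row of $R$ unchanged while the bottom row of $R$ is already zero; hence $PR \in \widetilde{\mathcal{E}_\mathcal{K}}$ trivially. For the right action, a direct computation gives
\[
RP = \begin{pmatrix} R_1 & R_1 P_2 + R_2 \\ 0 & 0 \end{pmatrix},
\]
so $\mathcal{R}(RP) \subset \mathcal{K}$. To verify that $RP$ is an idempotent I would invoke the criterion of Lemma \ref{classification of idempotent according to range}(iv): condition (a) amounts to $R_1^2 = R_1$, which is inherited from $R$, and condition (b) reduces, using $R_1^2 = R_1$ and $R_1 R_2 = R_2$, to
\[
R_1(R_1 P_2 + R_2) = R_1^2 P_2 + R_1 R_2 = R_1 P_2 + R_2.
\]
Hence $RP \in \widetilde{\mathcal{E}_\mathcal{K}}$. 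The associativity and compatibility axioms of the two-sided action are inherited for free from the associative multiplication in $\mathcal{A}$.

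There is no substantive obstacle here; the proof is essentially the one-line matrix computation in each case, once the normal forms from the preceding lemma are invoked. The only interpretive subtlety is in (ii): since $\widetilde{\mathcal{E}_\mathcal{K}}$ carries no natural abelian-group structure, the word \emph{module} should be read as asserting that $\widetilde{\mathcal{E}_\mathcal{K}}$ is closed under the two-sided $\mathcal{E}_\mathcal{K}$-action by composition inside $\mathcal{A}$, with associativity inherited from the ambient Banach algebra.
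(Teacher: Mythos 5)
Your proof follows essentially the same route as the paper's: write elements of $\mathcal{E}_\mathcal{K}$ and $\widetilde{\mathcal{E}_\mathcal{K}}$ in the normal forms of Lemma \ref{classification of idempotent according to range} (iv)--(v) and multiply blocks directly, and your observations $PQ=Q$, $QP=P$ (closure and non-commutativity) and $PR=R$ match the paper's displayed computations. One computational slip in the right action: since the lower-left block of $P=\begin{pmatrix} I_\mathcal{K} & P_2\\ 0 & 0\end{pmatrix}$ is $0$, the top-right block of $RP$ is $R_1P_2+R_2\cdot 0=R_1P_2$, not $R_1P_2+R_2$; the correct product is $RP=\begin{pmatrix} R_1 & R_1P_2\\ 0 & 0\end{pmatrix}$, exactly as in the paper's display. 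The slip is harmless: $R_1^2=R_1$ and $R_1(R_1P_2)=R_1P_2$ give $RP\in\widetilde{\mathcal{E}_\mathcal{K}}$ by the very criterion you invoke, so the argument stands once the extra $R_2$ term is deleted.
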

\begin{proof}
\begin{enumerate}
\item[(i)] Let $T=\begin{bmatrix}
			T_1 & T_2 \\
			T_3 & T_4
		\end{bmatrix}\in \mathcal{E}_\mathcal{K}$.  Since the range of $T$ is $\mathcal{K}$, the statement $(iv)$ in Lemma \ref{classification of idempotent according to range} shows that $T$ is of the form
		$T=\begin{bmatrix}
			I_\mathcal{K}&T_2\\
			0&0
		\end{bmatrix}$.  It is then obvious that these elements form a semigroup.
%
%
		\item[(ii)] The elements of $\widetilde{\mathcal{E}_\mathcal{K}}$ and $\mathcal{E}_\mathcal{K}$  are described in the statement (iii) and (iv) of the above lemma.  Let $T_1,T_1'$ idempotents in $\mathcal{B}(K)$ and $T_2,T_2'\in \mathcal{B}(L,K)$ with $T_1T_2=T_2$ and $T_1'T_2'=T_2'$.    We have
		
			\begin{equation*}
				\begin{bmatrix}
					T_1&T_1T_2\\
					0&0
				\end{bmatrix}\begin{bmatrix}
					I_\mathcal{K}&T_2'\\
					0&0
				\end{bmatrix}=\begin{bmatrix}
					T_1&T_1T_2'\\
					0&0
				\end{bmatrix},
			\end{equation*}
			
			\begin{equation*}
				\begin{bmatrix}
					I_\mathcal{K}&T_2'\\
					0&0
				\end{bmatrix}\begin{bmatrix}
					T_1&T_1T_2\\
					0&0
				\end{bmatrix}=\begin{bmatrix}
					T_1&T_1T_2\\
					0&0
				\end{bmatrix}.
			\end{equation*}		
This leads to the fact that $\widetilde{\mathcal{E}_\mathcal{K}}$ is left as well as right $\mathcal{E}_\mathcal{K}$-module
	\end{enumerate}
\end{proof}

We notice that $\{\widetilde{\mathcal{E}_\mathcal{K}}:\mathcal{K}\in\hat{\mathcal{K}}\}$ is an increasing family of sets of idempotents in $\mathcal{A}$ with union $\mathcal{E}$. This concept can be seen in literature and it has been used in different ways. Subsection 2.2 gives an idea.  We shall utilize it further for the problem at hand.


\begin{definition}\label{rem:2}
		Let $T\in\mathcal{F}$ with $T\not\in\mathcal{E}$. Then there is a smallest $t\ge 2$ such that $T=Q_1\cdots Q_t$ with each $Q_j\in\mathcal{E}$. We shall call $t$ the \textit{idempotent index} of $T$, in short $ii(T)$. For $T$ having idempotents index $t$ a factorization of $T$ into $t$ idempotents, say $E_1\cdots E_t$ will be said to be a \textit{minimal idempotent factorization}. 
\end{definition}

\begin{remarks}
	\begin{enumerate}[\rm (i)]
%
		\item 
		The idempotent index of $T\in\mathcal{E}$ will be taken to be 1 and for $T\in\mathcal{A}\setminus \mathcal{S}$, it will be taken to be $\infty$. 
		
		At times, we write $\mathcal{F}_t$ for $\mathcal{E}^t$ for $t\geq 2$.
		
		\item 
		For $t\in\mathbb{N}$, let $\mathcal{S}_t$ be the subset of $\mathcal{S}$ consisting of $T$ with idempotent index $t$. Then $\mathcal{S}_t$'s	are mutually disjoint and have union equal to $\mathcal{S}$.

		\item  For $s, t \in \mathbb{N}$, 
		$\mathcal{S}_{s+t}\subset \mathcal{S}_s \mathcal{S}_t$. 
		
		\item 
		Clearly $\mathcal{S}_t$ contained in symbol $\mathcal{E}^t$. Equality may not occur. 
		\begin{enumerate}[\rm (a)]
			\item 
			Examples 2.6 give a good idea to begin with. See Example 2.6 (i) (b) for $\mathcal{E}^3$ to be different from $\mathcal{S}_3$.
			
			\item Another instance is provided by non-zero mutually orthogonal projections in a Hilbert space $\mathcal{H}$.
		\end{enumerate}
		
		\item 
		Several other results in literature, for instance, \cite{JL} and \cite{AJLS} can be adapted to this general situation. We just give one more which helps in understanding idempotent index better, viz., part (vi) below and Theorem 3.6 below which is Theorem 3 (1) in \cite{JL} adapted to our set-up.
		

\item
Let $R$ be a ring with identity 1. Let $0\ne a \in R$ and $q\ne 1$ an idempotent in $R$ and $q^\prime =1-q$. Then $qa=0\Leftrightarrow a=q^\prime a$. So, for $b\in R$, $a=q^\prime b$ $\Leftrightarrow q^\prime (b-a)=0$, i.e., $c=b-a$ is a right annihilator of $q^\prime$. So, to begin with we can target elements like $b=a+c$ to be expressible as finite products of idempotents in $R$. Indeed, we also get that $ii(a)\leq 1+ii(c)$. 
\item 
Minimal representations for members of $\mathcal{S}\setminus \mathcal{E}$ are not unique. We have seen this before like in Example 2.6 (iv)(a) where we can have several maps $J$. Section 4 will give several Theorems and Examples for $t=2$. 
	\end{enumerate}
\end{remarks}

\begin{theorem}
	Let $T\in\mathcal{A}$ with block representation $T=\begin{bmatrix}
		T_1 & T_2\\
		0 & 0
	\end{bmatrix}$ in $X=\mathcal{K}\oplus\mathcal{L}$, a topological sum.
	\begin{enumerate}[\rm (i)]
		\item 
		If $T_1$ is a product of $s$ idempotents in $\mathcal{B}(\mathcal{K})$, then $T$ is a product of $s+1$ idempotents in $\mathcal{A}$. In this case, idempotent index of $T$ $\leq 1+$ idempotent index of $T_1$, in short $ii(T)\leq ii(T_1)+1$.
		
		\item 
		Inequality in (i) above can be strict.
	\end{enumerate}
\end{theorem}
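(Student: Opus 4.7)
The plan is twofold: for (i), to lift any factorization $T_1 = E_1\cdots E_s$ in $\mathcal{B}(\mathcal{K})$ to one of $T$ in $\mathcal{A}$ by prepending a single idempotent that installs the off-diagonal block $T_2$; for (ii), to exhibit a small example in which $T$ itself is already idempotent while $T_1$ is too, forcing strict inequality.

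For (i), I would write $T_1 = E_1 E_2 \cdots E_s$ with each $E_j$ an idempotent in $\mathcal{B}(\mathcal{K})$ and introduce
\[
P_0 = \begin{bmatrix} I_\mathcal{K} & T_2 \\ 0 & 0 \end{bmatrix}, \qquad P_j = \begin{bmatrix} E_j & 0 \\ 0 & I_\mathcal{L} \end{bmatrix} \quad (1 \le j \le s).
\]
Here $P_0 \in \mathcal{E}$ by Lemma~3.2(v) (its upper-left block is $I_\mathcal{K}$ and trivially $I_\mathcal{K} T_2 = T_2$), and each $P_j$ is patently idempotent from its block-diagonal form. A routine block computation gives $P_1 P_2 \cdots P_s = \begin{bmatrix} T_1 & 0 \\ 0 & I_\mathcal{L} \end{bmatrix}$, and then left-multiplication by $P_0$ yields $\begin{bmatrix} T_1 & T_2 \\ 0 & 0 \end{bmatrix} = T$. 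So $T = P_0 P_1 \cdots P_s$ exhibits $T$ as a product of $s+1$ idempotents, whence $ii(T) \le ii(T_1) + 1$.

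For (ii), I take $X = \mathbb{C}^3 = \mathcal{K} \oplus \mathcal{L}$ with $\mathcal{K} = \mathbb{C}^2$ and $\mathcal{L} = \mathbb{C}$, and set $T_1 = \begin{bmatrix} 1 & 0 \\ 0 & 0 \end{bmatrix}$, $T_2 = \begin{bmatrix} 1 \\ 0 \end{bmatrix}$. Then $T_1$ is itself an idempotent, so $ii(T_1) = 1$; and since $T_1 T_2 = T_2$, Lemma~3.2(iv) shows that $T = \begin{bmatrix} T_1 & T_2 \\ 0 & 0 \end{bmatrix}$ is also an idempotent in $\mathcal{A}$, so $ii(T) = 1 < 2 = ii(T_1) + 1$.

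There is no substantive obstacle; the only insight for (i) is that the idempotent $P_0$ installing $T_2$ must be placed on the \emph{left} of the product (placing it on the right would absorb $T_2$ into $T_1 T_2$), and the diagonal lifts $P_j$ must carry $I_\mathcal{L}$ rather than $0$ in the lower-right block so that the off-diagonal $T_2$ of $P_0$ survives the remaining multiplications unchanged. The example in (ii) just exploits that the condition $T_1 T_2 = T_2$ of Lemma~3.2(iv) is mild enough to be compatible with $T_1$ itself being idempotent.
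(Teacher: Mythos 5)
Your proof of (i) is correct and is essentially the paper's own argument: the same lift $P_j=\begin{bmatrix} E_j & 0\\ 0 & I_\mathcal{L}\end{bmatrix}$ together with the left factor $P_0=\begin{bmatrix} I_\mathcal{K} & T_2\\ 0 & 0\end{bmatrix}$, giving $T=P_0P_1\cdots P_s$ and hence $ii(T)\le ii(T_1)+1$. For (ii) you take a genuinely different route. The paper illustrates strictness with its Examples 2.6: it chooses $T_1$ with $ii(T_1)$ large or even infinite --- a nonidentity invertible scalar $a$ (so $ii(T_1)=\infty$), a matrix $a\in M_r(\mathbb{R})$, $r\ge 3$, with $ii(a)\ge 3$, or a shift-type operator from Remark 2.4 with $ii(T_1)=\infty$ --- and shows that nevertheless $\begin{bmatrix} T_1 & 0\\ 0 & 0\end{bmatrix}$ is a product of just two idempotents, so the gap in the inequality can be arbitrarily large, indeed infinite. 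Your example instead makes both $T_1$ and $T$ idempotent ($T_1T_2=T_2$, invoking Lemma 3.2(iv)), so $ii(T)=1<2=ii(T_1)+1$. This is a legitimate, and more elementary, witness to the literal claim that the inequality ``can be strict,'' but it only exhibits a gap of one in the most degenerate case; the paper's choice of examples carries the stronger message that $T$ may lie in $\mathcal{F}_2$ even when $T_1$ is not a product of idempotents at all, which is the phenomenon the rest of Section 4 develops.
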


\begin{proof}
	\begin{enumerate}[\rm (i)]
		\item 
		We follow the proof of Theorem 3 (1) in \cite{JL}. We have $T_1=E_1\cdots E_s$ with each $E_j$ an idempotent in $\mathcal{B}(\mathcal{K})$.
		
		Let for $1\leq j\leq s$, $Q_j=\begin{bmatrix}
			E_j & 0\\
			0 & I_\mathcal{L}
		\end{bmatrix}$ in $X=\mathcal{K}\oplus \mathcal{L}$ and let $Q_0=\begin{bmatrix}
		I_\mathcal{K} & T_2\\
		0 & 0
		\end{bmatrix}$. Then for $0\leq j\leq s$, $Q_j\in\mathcal{E}$ and $0\neq Q_j\ne I_X$. Now $T=Q_0Q_1\cdots Q_s$. So, $T\in \mathcal{E}^{s+1}$. By Remark 3.5 (iii), we have $ii(T)\leq ii(T_1)+1$.
		
		\item 
		We give some instances of the strict inequality.
		\begin{enumerate}[\rm (a)]
		\item 
			In Example 2.6 (i) (a), for $a\ne 1$, $ii(a)=\infty$ whereas $ii\begin{bmatrix}
				a & 0\\
				0 & 0
			\end{bmatrix}=2$.
		
		\item 
		We consider Example 2.6 (ii) and take $r\geq 3$. It is well-known (\cite{Er}, \cite{JL}, \cite{AJLS}) there is an $a\in M_r(\mathbb{R})$ with $ii(a)\geq 3$. By Example 2.6 (ii), $ii\begin{bmatrix}
			a & 0\\
			0 & 0
		\end{bmatrix}=2$.
		
		\item 
		We consider Example 2.6 (iv) and take $\mathcal{K}=l_2=\mathcal{L}$ and $\mathcal{H}=l_1\oplus l_2$. Next, we take $T_1$ to be any of the $T$'s in Remark 2.4 above. Then $ii(T_1)=\infty$. But by Example 2.6 (iv) (a), $ii\begin{bmatrix}
			T_1 & 0\\
			0 & 0
		\end{bmatrix}=2$.
		\end{enumerate}
	\end{enumerate}
\end{proof}

\begin{lemma}
	If $T\in \mathcal{F}\setminus\mathcal{E}$ is written as $T=Q_1\dots Q_t$ where $Q_1,\dots,Q_t$ are idempotents and $t$ is the idempotent index of $T$, then for any $1\leq j < t$ we must have 
	\begin{enumerate}[\rm (i)]
		\item 
		$Q_j$ and $Q_{j+1}$ cannot be in any semigroup $\mathcal{S}^\prime\subset\mathcal{E}$.
		
		\item 
		$K_{j+1}\setminus K_j\ne \emptyset$ and $K_j\setminus K_{j+1}\ne \emptyset$, where $K_s$ denotes $\mathcal{R}(Q_s)$, $1\leq s< t$.  
	\end{enumerate}
\end{lemma}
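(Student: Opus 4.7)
The plan is to prove both parts by contradiction, in each case collapsing the two consecutive factors $Q_j, Q_{j+1}$ to a single idempotent and so contradicting the minimality $t = ii(T)$. The key recurring fact I would use throughout is that any idempotent $Q \in \mathcal{E}$ acts as the identity on its own range $\mathcal{R}(Q)$.

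For part (i), I would assume $Q_j, Q_{j+1}$ both lie in some sub-semigroup $\mathcal{S}' \subseteq \mathcal{E}$. Then $Q_j Q_{j+1} \in \mathcal{S}' \subseteq \mathcal{E}$, so the product is itself an idempotent $Q$, and substituting $Q$ in place of the pair yields a factorization of $T$ of length $t-1$, immediately contradicting $ii(T) = t$.

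For part (ii), I would suppose for contradiction that $\mathcal{R}(Q_j)$ and $\mathcal{R}(Q_{j+1})$ are comparable. If $K_{j+1} \subseteq K_j$, then for each $x \in X$ the vector $Q_{j+1} x$ lies in $K_j = \mathcal{R}(Q_j)$ and is therefore fixed by $Q_j$, yielding $Q_j Q_{j+1} = Q_{j+1}$. If instead $K_j \subseteq K_{j+1}$, the symmetric observation gives $Q_{j+1} Q_j = Q_j$, and consequently
\[
(Q_j Q_{j+1})^2 \;=\; Q_j (Q_{j+1} Q_j) Q_{j+1} \;=\; Q_j^2 Q_{j+1} \;=\; Q_j Q_{j+1},
\]
so $Q_j Q_{j+1}$ is still an idempotent. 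In either case the same length-reduction argument as in (i) produces a factorization of $T$ with fewer than $t$ idempotents, contradicting minimality.

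The main (and rather minor) obstacle is bookkeeping around the degenerate idempotents in the merged position: if the new idempotent equals $0$ then $T = 0$, which is incompatible with $T \in \mathcal{F}\setminus\mathcal{E}$, while if it equals $I_X$ it can simply be deleted from the product, still producing a strictly shorter factorization. Modulo this edge case the entire argument rests on the elementary fact that an idempotent fixes its range, which is what converts either common sub-semigroup membership or comparability of ranges into an idempotent product.
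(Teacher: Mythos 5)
Your proof is correct and follows essentially the same route as the paper: for (i) closure of the sub-semigroup makes $Q_jQ_{j+1}$ an idempotent, and for (ii) the fact that an idempotent fixes its range shows that comparability of $K_j$ and $K_{j+1}$ collapses $Q_jQ_{j+1}$ to a single idempotent (either $Q_{j+1}$ itself or an idempotent product), contradicting minimality of $t$. The only difference is presentational: the paper reduces to the case $t=2$, whereas you carry out the length-reduction substitution for general $t$ and handle the degenerate merged idempotents explicitly, which is harmless extra care.
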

\begin{proof}
Indeed it follows from Theorem 3.3 (ii). But we also give an idea of a direct proof. It is enough to consider the case when $T=Q_1Q_2$, 
i.e. when $t=2$.  If we have $K_1\subset K_2$, then  for any $x\in X$ we have that $Q_2Q_1(x)=Q_1(x)$ and 
hence $Q_1Q_2Q_1Q_2(x)=Q_1Q_1Q_2(x)=Q_1(x)$, so that $Q_1Q_2$ is indeed an idempotent.  It is  then clear that $Q_1Q_2\in \mathcal{E}_{K_1}$, but this contradicts the fact that $t=2$ is the idempotent index of $T$.  One can show that if $K_2\subseteq K_1$, then $Q_1Q_2=Q_2$, this contradicts again the fact that $t=2$.        
\end{proof}

\begin{remark}
			The converse of the result in Lemma 3.7 (ii) is not true as can be seen by taking
		$Q_1=\begin{bmatrix}
			1 & a\\
			0 & 0
		\end{bmatrix}$ and $Q_2=
		\begin{bmatrix}
			0 & 0\\
			b & 1
		\end{bmatrix}$ with $ab=1$.
\end{remark}

%

\begin{remark}\label{rem:5}
	It will be helpful to list more idempotents in $\mathcal{A}$ in the form as in \ref{subsec:2} (i) simply by interchanging the roles of $\mathcal{K}$ and $\mathcal{L}$, $T_1$ and $T_4$, $T_2$ and $T_3$ in the statements in the discussion above.
	
	\begin{enumerate}[\rm (i)]
	\item 	Idempotents $Q$ in $\mathcal{A}$ with $\mathcal{R}(Q)\supset \mathcal{L}$, are given by $Q=\begin{bmatrix}
		T_1&T_2\\
		T_3&I_\mathcal{L}
	\end{bmatrix}$ that satisfy $T_2T_3=T_1(I_\mathcal{K}-T_1)=(I_\mathcal{K}-T_1)T_1, T_1T_2=0, T_3T_1=0,$ and $T_3T_2=0$.
	
	\item 
	$Q\in\mathcal{E}$ with $\mathcal{R}(Q)\subset \mathcal{L}$ are given by $Q=\begin{bmatrix}
		0&0\\
		DC&D
	\end{bmatrix}$ with $D\in\mathcal{B}(\mathcal{L})$ an idempotent, $C\in\mathcal{B}(\mathcal{K},\mathcal{L})$ i.e., $Q=\begin{bmatrix}
	0 & 0\\
	C_1 & D
	\end{bmatrix}$ with $D\in\mathcal{B}(\mathcal{L})$ an idempotent and $C_1\in\mathcal{B}(\mathcal{K},\mathcal{L})$ with $\mathcal{R}(C_1)\subset \mathcal{R}(D)$; further, $\mathcal{R}(Q)=\mathcal{R}(D)$.
	
	\item 
	Any $Q\in\mathcal{E}$ with $\mathcal{R}(Q)=\mathcal{L}$ is given by $Q=\begin{bmatrix}
		0&0\\
		C&I_\mathcal{L}
	\end{bmatrix}$ with $C\in\mathcal{B}(\mathcal{K},\mathcal{L})$. 
	\end{enumerate}
\end{remark}

We note that all these form a non-commutative semigroup with product $Q_1Q_2=Q_2$, viz., 
\begin{equation*}
	\begin{bmatrix}
		0&0\\
		C&I_\mathcal{L}
	\end{bmatrix}\begin{bmatrix}
	0&0\\
	C^\prime & I_\mathcal{L}
	\end{bmatrix}=\begin{bmatrix}
	0&0\\
	C^\prime & I_\mathcal{L}
	\end{bmatrix}
\end{equation*}
in line with Theorem \ref{tm:1} (i).

\subsection{Concepts of local block representation, dimension, and ``essentially contained in"}

We recast these concepts in the way we like for further use.

\begin{definition}
	\label{Decomposition for a non regular operator}
	Let $0\ne T\in \mathcal{A}$. 
	Assume that $T$ has a non-zero left annihilator. Let $\mathcal{K}_0=\mathcal{R}(T)$ and $\mathcal{K}_1$ it closure in $X$. Lemma 3.1 and Lemma 3.2 above give us a $\mathcal{K}\in\hat{\mathcal{K}}$ with $\mathcal{K}_1\subseteq\mathcal{K}$ and enable us to express $X=\mathcal{K}\oplus\mathcal{L}$, a topological sum  for any complement $\mathcal{L}$ of $\mathcal{K}$ in $X$ and have the block, representation of $T$ with respect to this decomposition as $T=\begin{bmatrix}
		T_1&T_2\\
		0&0
	\end{bmatrix}$ in $X=\mathcal{K}\oplus \mathcal{L}$. We call it the \textit{\bf Local Block representation} of $T$ with respect to the decomposition $\mathcal{K}\oplus \mathcal{L}$ of $X$, or simply a local block representation if no confusion can arise.  
	
	Notice that such an operator $T$ has a non-zero right annihilator if and only if at least one of the following conditions is satisfied
	\begin{enumerate}[\rm (a)]
		\item $\mathcal{N}(T_1)\ne\{0\}$,
		
		\item $\mathcal{N}(T_2)\ne\{0\}$, 
		
		\item $\mathcal{R}(T_1) \cap \mathcal{R}(T_2)\ne\{0\}$.
	\end{enumerate}
	We assume that it is so. 
\end{definition}

If $\mathcal{K}_1\in\mathcal{K}$, then we take $\mathcal{K}=\mathcal{K}_1$. If $\mathcal{K}_1\not\in\hat{\mathcal{K}}$, then by Proposition 2.1, there are infinitely many $\mathcal{K}$'s in $\hat{\mathcal{K}}$ available. We can carry out our discussion in any of them. There will be some interrelations which we do not go into at this moment.

 \begin{remark}
\begin{enumerate}[\rm (i)]
	\item 
	Let $G$ be a Banach space, $G_1$ a dense linear subspace of $G$, $S_1$ a non-empty subset of $G_1$, $\Gamma(S_1)$ the linear span of $S_1$ and $\Gamma_q(S_1)$ finite linear combination of members of $S_1$ using rational scalars only. We note that if $S_1$ is infinite, then $\#\Gamma_q(S_1)=\#S_1$. Also, the closures of $\Gamma(S_1)$ and $\Gamma_q(S_1)$ in $G_1$ coincide and so do the closure of $\Gamma(S_1)$ and $\Gamma_q(S_1)$ in $G$, let us denote them by $\overline{\Gamma(S_1)}^{G_1}$, $\overline{\Gamma_q(S_1)}^{G_1}$, $\overline{\Gamma(S_1)}$, and $\overline{\Gamma_q(S_1)}$, respectively. Finally, $\overline{\Gamma(S_1)}=G_1\Leftrightarrow\overline{\Gamma(S_1)}=G$. So, the smallest cardinality of $S_1$ such that $\overline{\Gamma(S_1)}=G$, equivalently, $\overline{\Gamma(S_1)}^{G_1}=G_1$ coincide. We call them the \textbf{dimension of $G$} as well as \textbf{dimension of $G_1$}, and denote them by ${\rm dim}G$ and ${\rm dim}(G_1)$, respectively.
	
	\item 
	If $G$ is a Hilbert space. Then ${\rm dim}(G)$ is the cardinality of any complete orthonormal basis of $G$. Moreover, if for Hilbert spaces $G$ and $G^\prime$ we have that if ${\rm dim}G\leq {\rm dim}G^\prime$, then there exists an isometric linear map $J$ on $G$ onto a closed subspace $G^{\prime\prime}\subset G^\prime$ and the adjoint $J^*:G^\prime \rightarrow G$ satisfies $J^*J=I_G$. Indeed, $J^*=J^{-1}\circ P_{G^{\prime\prime}}$, where $P_{G^{\prime\prime}}$ is the (orthogonal) projection on $G^\prime$ onto $G^{\prime\prime}$. 
	
	\item 
	To have a map like $J$ in the context of Banach spaces, we can impose the condition that $G^{\prime\prime}\in\hat{\mathcal{K}}$. To elaborate, we say that $G$ is \textbf{essentially contained in} $G^\prime$, in short $G\subset_e G^\prime$, if there exists a complemented closed subspace $G^{\prime\prime}$ of $G^\prime$ and a bicontinuous linear bijection $J: G\rightarrow G^{\prime\prime}$ and for notational convenience, denoted $J^{-1}\circ p_{G^{\prime\prime}}$ by $J^*$, where $p_{G^{\prime\prime}}$ is an idempotent in $\mathcal{B}(G^\prime)$ with range $G^{\prime\prime}$ coming from some complement of $G^{\prime\prime}$ in $G^\prime$. Then $J^*J=I_G$. We note that in this case, ${\rm dim}G={\rm dim}G^{\prime\prime}\leq {\rm dim}G^\prime$. The converse is true for Hilbert spaces.
	
	\item 
	We next note that if $A\in\mathcal{B}(G,G^\prime)$, then ${\rm dim}\mathcal{R}(A)\leq {\rm dim}G$. 
\end{enumerate}
\end{remark}

\begin{theorem}\label{rem:17}
	We shall freely use Remarks and results above. Let $t\in\mathbb{N}$ with $t\geq 2$.	Let $T \in\mathcal{S}_t$, for $1\leq j\leq t$, $E_j$ in $\mathcal{E}$ with $T=E_1...E_t$ a minimal decomposition of $T$. Let $\mathcal{K}=\mathcal{R}(E_1)$, $\mathcal{L}$, a complement of $\mathcal{K}$, for instance $\mathcal{L}=\mathcal{N}(E_t)$, in $X$. Then we have the following.
			\begin{enumerate}[\rm (i)]
				\item 
				 $E_1=\begin{bmatrix}
					I_\mathcal{K} & B\\
					0 & 0
				\end{bmatrix}$ for some $B\in\mathcal{B}(\mathcal{L},\mathcal{K})$, and $T=\begin{bmatrix}
				T_1 & T_2\\
				0 & 0
				\end{bmatrix}$ in $X=\mathcal{K}\oplus\mathcal{L}$, a topological sum for some $T_1\in\mathcal{B}(\mathcal{K})$, $T_2\in\mathcal{B}(\mathcal{L},\mathcal{K})$, its local block representation with respect to $X=\mathcal{K}\oplus\mathcal{L}$.
				
				\item 
				$S=E_2\cdots E_t=\begin{bmatrix}
				T_1-BC & T_2-BD\\
				C & D
				\end{bmatrix}=T_{B,C,D}$, say, for some $C\in\mathcal{B}(\mathcal{K},\mathcal{L})$ and $D\in\mathcal{B}(\mathcal{L})$.
				
				\item 
				$ii(\mathcal{S})=t-1$.
				
				\item 
				 ${\rm dim}\mathcal{R}(S)\geq {\rm dim}\mathcal{K}_1$, where $\mathcal{K}_1$ is the closure of $\mathcal{R}(T)$ in $X$.
			\end{enumerate}	
\end{theorem}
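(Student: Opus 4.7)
The plan is to tackle the four parts essentially in order, leaning on the classification lemma \ref{classification of idempotent according to range} and on the definition of idempotent index to run the required contradictions. Throughout, I will repeatedly use that $T=E_1 S$ with $S=E_2\cdots E_t$ and that composition with the first factor $E_1$ preserves the block structure in a transparent way.

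For (i), since $E_1$ is an idempotent with $\mathcal{R}(E_1)=\mathcal{K}$, part (v) of Lemma \ref{classification of idempotent according to range} forces
$E_1=\bigl[\begin{smallmatrix}I_{\mathcal{K}}&B\\0&0\end{smallmatrix}\bigr]$
for some $B\in\mathcal{B}(\mathcal{L},\mathcal{K})$. Because $T=E_1 S$, we have $\mathcal{R}(T)\subseteq \mathcal{R}(E_1)=\mathcal{K}$, so Lemma \ref{classification of idempotent according to range} (iii)(a) applied directly to the operator $T$ (idempotency is not needed there) yields $T=\bigl[\begin{smallmatrix}T_1&T_2\\0&0\end{smallmatrix}\bigr]$ for some $T_1\in\mathcal{B}(\mathcal{K})$, $T_2\in\mathcal{B}(\mathcal{L},\mathcal{K})$, which is the local block representation.

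For (ii), write $S$ in block form as $S=\bigl[\begin{smallmatrix}S_1&S_2\\C&D\end{smallmatrix}\bigr]$ with $C\in\mathcal{B}(\mathcal{K},\mathcal{L})$ and $D\in\mathcal{B}(\mathcal{L})$. A direct block multiplication gives
\begin{equation*}
T=E_1 S=\begin{bmatrix}I_{\mathcal{K}}&B\\0&0\end{bmatrix}\begin{bmatrix}S_1&S_2\\C&D\end{bmatrix}=\begin{bmatrix}S_1+BC&S_2+BD\\0&0\end{bmatrix},
\end{equation*}
so comparing with the expression for $T$ from (i) yields $S_1=T_1-BC$ and $S_2=T_2-BD$, which is the required form $T_{B,C,D}$.

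For (iii), $S$ is manifestly a product of $t-1$ idempotents, so $ii(S)\le t-1$. Conversely, if $ii(S)\le t-2$, we could write $S=F_1\cdots F_s$ with $s\le t-2$ (or $s=1$ if $S\in\mathcal{E}$), yielding $T=E_1 F_1\cdots F_s\in\mathcal{E}^{s+1}$ with $s+1\le t-1$, contradicting $ii(T)=t$. Hence $ii(S)=t-1$. (When $t=2$, the statement degenerates to $ii(S)=1$, i.e.\ $S=E_2$ is itself an idempotent.)

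For (iv), which I expect to be the main subtlety, the idea is to transport any ``dense generating set'' for $\mathcal{R}(S)$ across the continuous map $E_1$ to produce one for $\mathcal{K}_1=\overline{\mathcal{R}(T)}$. Fix a subset $\tilde{S}\subseteq \mathcal{R}(S)$ of cardinality $\dim \mathcal{R}(S)$ whose linear span is dense in $\overline{\mathcal{R}(S)}$ (cf.\ Remark 3.10). Put $\tilde{T}:=E_1(\tilde{S})\subseteq E_1(\mathcal{R}(S))=\mathcal{R}(T)$. Given any $y=E_1(s)\in \mathcal{R}(T)$ with $s\in\mathcal{R}(S)$, approximate $s$ in norm by $s'\in\Gamma(\tilde{S})$; then $E_1(s')\in\Gamma(\tilde{T})$ approximates $y$ by continuity of $E_1$. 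Hence $\Gamma(\tilde{T})$ is dense in $\mathcal{R}(T)$, and therefore in $\mathcal{K}_1$. By the definition of dimension, $\dim\mathcal{K}_1\le |\tilde{T}|\le |\tilde{S}|=\dim\mathcal{R}(S)$, which is the inequality claimed. The main point to be careful about is that $\mathcal{R}(S)$ need not be closed, but the formulation of dimension in Remark 3.10 for a (possibly non-closed) linear subspace $G_1$ is tailor-made for this situation and absorbs that issue.
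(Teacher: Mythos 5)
Your proposal is correct and takes essentially the same route as the paper: part (i) via Lemma 3.2(v) together with the local block representation, part (ii) by solving $T=E_1S$ through direct block multiplication, and part (iii) by the minimality contradiction on idempotent index. For (iv) the paper simply invokes $\dim\mathcal{R}(T)\le\dim\mathcal{R}(S)$ (Remark 3.11(iv), not 3.10 as you cite) together with $\dim\mathcal{R}(T)=\dim\mathcal{K}_1$, whereas you prove that inequality directly by pushing a dense generating set of $\mathcal{R}(S)$ through the continuous map $E_1$ --- a harmless expansion of the same idea.
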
			
			
%
%
		
\begin{proof}
(i) it is immediate from Lemma 3.2 (v) and Definition 3.10. 

(ii) Let $S= \begin{bmatrix}
	S_1 & S_2\\
	S_3 & S_4
\end{bmatrix}$ in $X=\mathcal{K}\oplus\mathcal{L}$. We use (i) and try to solve the equation $T=E_1S$. Direct computations give the solutions as stated in (ii). An alternative argument is given in Remark 3.13 (i) below.

(iii) To see this, the definition of $S$ gives that it has idempotent index $\le t-1$, if its idempotent index is $< t-1$ then that makes idempotent index of $T\le t-1$, a contradiction.

(iv) Because $T=E_1S$, we have that ${\rm dim}\mathcal{R}(T)\le {\rm dim}\mathcal{R}(S)$. But ${\rm dim}\mathcal{R}(T)= {\rm dim}\mathcal{K}_1$. So, the result follows.		
\end{proof}

\begin{remark}
	Let $0\ne T\in\mathcal{A}$ have a non-zero left annihilator and $T=\begin{bmatrix}
		T_1 & T_2\\
		0 & 0
	\end{bmatrix}$, a local block representation for $T$ in $X=\mathcal{K}\oplus \mathcal{L}$ with $\mathcal{K}\supset \mathcal{K}_1= \overline{\mathcal{K}_0}$, where $\mathcal{K}_0=\mathcal{R}(T)$. We intend to determine if $T$ is in $\mathcal{S}$ or not, if yes then we would like to obtain $ii(T)$ and factorizations  of $T$ into idempotents in $\mathcal{A}$. We display our line of thought and approach. We demonstrate it concretely for $\mathcal{F}_2$ in the next section.
	\begin{enumerate}[\rm (i)]
		\item 
		For any $B\in\mathcal{B}(\mathcal{L},\mathcal{K})$, $Q_B=\begin{bmatrix}
			I_\mathcal{K} & B\\
			0 & 0
		\end{bmatrix}$ satisfies $Q_BT=T$ and $Q_B\in \mathcal{E}_\mathcal{K}$. Right annihilators of $Q_B$ are given by $\begin{bmatrix}
			0 & 0\\
			C & D
		\end{bmatrix}$ in $X=\mathcal{K}\oplus \mathcal{L}$ with $C\in\mathcal{B}(\mathcal{K},\mathcal{L})$ and $D\in\mathcal{B}(\mathcal{L})$. So, in view of Remark 3.5 (vi) $Q_BT_{B,C,D}=T$, where $T_{B,C,D}=\begin{bmatrix}
			T_1-BC & T_2-BD\\
			C & D
		\end{bmatrix}$. This can be obtained by direct computation as well. So, we target $T_{B,C,D}$ for being in $\mathcal{S}$ or not, if yes, then determine its idempotent index $t_{B,C,D}$ and conclude that $T\in \mathcal{S}$ with $ii(T)\leq t_{B,C,D}+1$. 
			
			\item 
			$C=0,D=0$ gives $S=\begin{bmatrix}
				T_1&T_2\\
				0&0
			\end{bmatrix}=T$, which we do not want.
			 
			We would like to figure out $B\in\mathcal{B}(\mathcal{L},\mathcal{K})$ and $(C,D)\ne (0,0)$ in $\mathcal{B}(\mathcal{K},\mathcal{L})\times\mathcal{B}(\mathcal{L})$, for which $T_{B,C,D}\in\mathcal{F}$. As already noted in Lemma 2.3 (ii), this necessitates $\overline{\mathcal{R}(T_{B,C,D})}\ne X$, and $\mathcal{N}(T_{B,C,D})\ne \{0\}$ and $\mathcal{N}(T)\ne \{0\}$.
			
			\item 
			Indeed, if $\mathcal{K}_1^\prime =\overline{\mathcal{R}(T_{B,c,D})}\neq \mathcal{X}$, then we may repeat the discussion for $T$ for this $T_{B,C,D}$ and have an idempotent $Q^\prime$ with range $\mathcal{K}^\prime$ containing $\mathcal{K}_1^\prime$ as a factor of $T_{B,C,D}$. By Lemma 3.7, we would like $\mathcal{K}^\prime \setminus \mathcal{K}\ne \emptyset \ne \mathcal{K}\setminus \mathcal{K}^\prime$. This leads to an iteration process, which may end or just continue. 
			
			$\mathcal{K}^\prime\setminus \mathcal{K}\ne\emptyset$ can be ensured by $(C,D)\ne (0,0)$. For $\mathcal{K} \setminus \mathcal{K}^\prime \ne \emptyset$ we must have $(\mathcal{R}(T_1-BC)+\mathcal{R}(T_2-BD))\ne \mathcal{K}$.
			
			\item 
			Now,  
			\begin{align*}
				\mathcal{N}(T_{B,C,D})\ne 0 \Leftrightarrow & \;{\rm for\; some} \; k\in \mathcal{K} \, {\rm and}\, l \in \mathcal{L},
				0\ne  k+l\in\mathcal{N}(T_{B,C,D})\\
				\Leftrightarrow & (T_1-BC)k+(T_2-BD)l=0\mbox{ and 
				} Ck+Dl=0\\
				\Leftrightarrow & T_1k+T_2l-B(Ck+Dl)=0\mbox{ and 
				} Ck+Dl=0, 
				\\
				\Leftrightarrow & 
				T_1k+T_2l=0\mbox{ and } 
				Ck+Dl=0, {\rm for\; some\;} (k,l) \ne (0,0) \in \mathcal{K} \times 
				\mathcal{L}
			\end{align*}Anyone of the  following  conditions is sufficient, viz.,
			
			\noindent $\mathcal{N}(T_1)\cap \mathcal{N}(C)\ne \{0\}
			\mbox{or }  \mathcal{N}(T_2)\cap \mathcal{N}(D)\ne \{0\}
			\mbox{ or }  \exists\ k,l \mbox{ with } 0\ne T_1k=-T_2l,  0\ne Ck = -Dl$.
			
			This implies
			$\mathcal{N}(T_1)\ne\{0\}\;
			\mbox{or }   \mathcal{N}(T_2)\ne \{0\}
			\mbox{or }   \mathcal{R}(T_1)\cap \mathcal{R}(T_2)\ne \{0\}$ together with $ \mathcal{R}(C) \cap \mathcal{R}(D) \ne \{0\}$
		\end{enumerate}
	
\end{remark}


\section{New Classes of Elements of $\mathcal{F}_2$, the products of two idempotents in $\mathcal{A}$}\label{sec:3}

We shall give some general results and some special examples of operators in $\mathcal{A}$ that are products of two idempotents. We begin with some direct applications of discussion in Sections 2 and 3 above. We freely use the notation, terminology, and results therein. For this section, $0\ne T \ne I_X\in\mathcal{A}$ has non-zero left and right annihilators and $T= \begin{bmatrix}
	T_1 & T_2\\
	0 & 0
\end{bmatrix}$ in $X=\mathcal{K}\oplus\mathcal{L}$, is a local block representation for $T$, unless otherwise stated. We will simply write $T=\begin{bmatrix}
	T_1 & T_2\\
	0 & 0
\end{bmatrix}$, if no confusion can arise. Because $T_1= I_\mathcal{K}$ forces $T$ to be an idempotent with range equal to $\mathcal{K}$, we usually avoid repetition and take $T_1\ne I_\mathcal{K}$.

\subsection{Products of two idempotents by inspection.}

\begin{lemma}\label{tm:2}
	Let  $B\in\mathcal{B}(\mathcal{L},\mathcal{K})$, and $C\in\mathcal{B}(\mathcal{K},\mathcal{L})$. Then
		\item 
		$$\begin{bmatrix}
			BC&B\\
			0&0
		\end{bmatrix}\ \text{and}\ \begin{bmatrix}
		0 & 0\\
		C & CB
		\end{bmatrix}$$ are products of two idempotents in $\mathcal{A}$.
\end{lemma}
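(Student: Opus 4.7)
The plan is to exhibit both matrices as products of the \emph{same} pair of idempotents, taken in the two possible orders. Specifically, set
$$E_1 = \begin{bmatrix} I_\mathcal{K} & B \\ 0 & 0 \end{bmatrix}, \qquad E_2 = \begin{bmatrix} 0 & 0 \\ C & I_\mathcal{L} \end{bmatrix}.$$

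First I would verify that $E_1, E_2 \in \mathcal{E}$. For $E_1$, this is immediate from Lemma \ref{classification of idempotent according to range}(iv): the diagonal block $I_\mathcal{K}$ is idempotent and $I_\mathcal{K} \cdot B = B$, so condition (b) holds. For $E_2$, the symmetric statement from Remark \ref{rem:5}(iii) applies, since any operator of the form $\begin{bmatrix} 0 & 0 \\ C & I_\mathcal{L} \end{bmatrix}$ with $C \in \mathcal{B}(\mathcal{K},\mathcal{L})$ is automatically idempotent.

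The second (and only remaining) step is a direct block-matrix computation:
$$E_1 E_2 = \begin{bmatrix} I_\mathcal{K} & B \\ 0 & 0 \end{bmatrix}\begin{bmatrix} 0 & 0 \\ C & I_\mathcal{L} \end{bmatrix} = \begin{bmatrix} BC & B \\ 0 & 0 \end{bmatrix},$$
$$E_2 E_1 = \begin{bmatrix} 0 & 0 \\ C & I_\mathcal{L} \end{bmatrix}\begin{bmatrix} I_\mathcal{K} & B \\ 0 & 0 \end{bmatrix} = \begin{bmatrix} 0 & 0 \\ C & CB \end{bmatrix},$$
which establishes both claims at once.

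There is no real obstacle here; the only conceptual point is the choice of the second factor. Motivated by the discussion following Definition \ref{Decomposition for a non regular operator} and Remark 3.13, one looks for a right factor of the form $\begin{bmatrix} 0 & 0 \\ C & D \end{bmatrix}$ (a right annihilator of the complementary idempotent $I_X - E_1$) that is itself idempotent and whose product with $E_1$ recovers the target. Taking $D = I_\mathcal{L}$ is the simplest choice making the right factor automatically idempotent, and it happens to produce exactly the desired $(1,1)$- and $(1,2)$-entries; the elegance is that the same pair, multiplied in the reverse order, simultaneously yields the second target matrix.
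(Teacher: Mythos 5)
Your proposal is correct and is essentially the paper's own proof: the same pair $E_1=\begin{bmatrix} I_\mathcal{K} & B \\ 0 & 0 \end{bmatrix}$, $E_2=\begin{bmatrix} 0 & 0 \\ C & I_\mathcal{L} \end{bmatrix}$ (justified via Lemma \ref{classification of idempotent according to range} and Remark \ref{rem:5}(iii)), with the two target operators obtained as $E_1E_2$ and $E_2E_1$ by the same block computation. Nothing further is needed.
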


\begin{proof}
		Using Lemma 3.2 (v) and Remark \ref{rem:5} (iii), $$E_1=\begin{bmatrix}
			I_\mathcal{K} & B\\
			0 & 0
		\end{bmatrix}\ \text{and}\ E_2=\begin{bmatrix}
		0&0\\
		C&I_\mathcal{L}
		\end{bmatrix}$$ are idempotents in $\mathcal{A}$. Also, $$\begin{bmatrix}
		BC&B\\
		0&0
		\end{bmatrix}=E_1E_2\quad {\rm and}\quad \begin{bmatrix}
		0 & 0\\
		C & CB
		\end{bmatrix}=E_2E_1$$.
\end{proof}

\begin{theorem}\label{cor:1}
	Let $T=\begin{bmatrix}
		T_1&T_2\\
		0&0
	\end{bmatrix}$ with $T_1\in\mathcal{B}(\mathcal{K})$, $T_2\in\mathcal{B}(\mathcal{L},\mathcal{K})$ be as in Lemma 3.2 (iii) and $\sigma=\begin{bmatrix}
	0&0\\
	\sigma_3&\sigma_4
	\end{bmatrix}$ with $\sigma_3\in\mathcal{B}(\mathcal{K},\mathcal{L})$, $\sigma_4\in\mathcal{B}(\mathcal{L})$.
	
	\begin{enumerate}[\rm (i)]
		\item 
		If $\mathcal{R}(T_1)\subset\mathcal{R}(T_2)$ then we have the following.
		
		\begin{enumerate}[\rm (a)]
			\item 
			$T$ can be expressed as the product $E_1E_2$ of two idempotents $E_1=\begin{bmatrix}
				I_\mathcal{K}&T_2\\
				0&0
			\end{bmatrix}$, $E_2=\begin{bmatrix}
				0&0\\
				C&I_\mathcal{L}
			\end{bmatrix}$, where $C$ is given by Theorem D to satisfy $T_1=T_2C$.
			
			\item 
			${\rm dim}\mathcal{K}_1\le {\rm dim}\mathcal{L}$.
		\end{enumerate}
		
		\item 
		If $\mathcal{R}(\sigma_4)\subset\mathcal{R}(\sigma_3)$, then we have the following.
		\begin{enumerate}[\rm (a)]
			\item 
			$\sigma$ can be expressed as the product $F_1F_2$ of the idempotents $F_1$ and $F_2$ in $\mathcal{A}$ given by
			$F_1=\begin{bmatrix}
				0&0\\
				\sigma_3&I_\mathcal{L}
			\end{bmatrix}$, $F_2=\begin{bmatrix}
			I_\mathcal{K}&B\\
			0&0
			\end{bmatrix}$, where $B\in\mathcal{B}(\mathcal{L},\mathcal{K})$ is given by Theorem D to satisfy $\sigma_y=\sigma_3B$.
			
			\item 
			  If $\mathcal{L}_1$ is the closure of $\mathcal{R}(\sigma)$, then ${\rm dim}\mathcal{L}_1\le {\rm dim}\mathcal{K}$.
		\end{enumerate}
	\end{enumerate}
\end{theorem}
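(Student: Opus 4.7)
The plan is to obtain (i) directly by invoking Theorem D (Theorem \ref{rem:4}) to construct the operator $C$ that realizes $T_1=T_2C$, then to verify by block multiplication that the prescribed $E_1,E_2$ are idempotents whose product is $T$; the dimension estimate in (i)(b) will then follow from the explicit form of $\mathcal{R}(T)$ supplied by Lemma \ref{classification of idempotent according to range} (iii)(b). Part (ii) is formally symmetric to (i) under the interchange of the roles of $\mathcal{K}$ and $\mathcal{L}$, together with the obvious transpositions of the block entries.

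Carrying (i) out: since $T_1\in\mathcal{B}(\mathcal{K})$ and $T_2\in\mathcal{B}(\mathcal{L},\mathcal{K})$ share the codomain $\mathcal{K}$, the hypothesis $\mathcal{R}(T_1)\subset\mathcal{R}(T_2)$ is exactly what Theorem D requires, so it supplies a (unique) $C\in\mathcal{B}(\mathcal{K},\mathcal{L})$ with $T_1=T_2C$. The operator $E_1$ is an idempotent with range $\mathcal{K}$ by Lemma \ref{classification of idempotent according to range} (v), and $E_2$ is an idempotent with range $\mathcal{L}$ by Remark \ref{rem:5} (iii). Multiplying in blocks,
\[
E_1E_2=\begin{bmatrix} I_\mathcal{K} & T_2 \\ 0 & 0\end{bmatrix}\begin{bmatrix} 0 & 0 \\ C & I_\mathcal{L}\end{bmatrix}=\begin{bmatrix} T_2C & T_2 \\ 0 & 0\end{bmatrix}=\begin{bmatrix} T_1 & T_2 \\ 0 & 0 \end{bmatrix}=T,
\]
which proves (i)(a). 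For (i)(b), Lemma \ref{classification of idempotent according to range} (iii)(b) gives $\mathcal{R}(T)=\mathcal{R}(T_1)+\mathcal{R}(T_2)$, which collapses to $\mathcal{R}(T_2)$ under the hypothesis, so $\mathcal{K}_1=\overline{\mathcal{R}(T_2)}$. Since $\mathcal{R}(T_2)$ is dense in $\mathcal{K}_1$, Remark 3.11 (i) yields $\dim\mathcal{K}_1=\dim\mathcal{R}(T_2)$, and Remark 3.11 (iv) applied to $T_2\in\mathcal{B}(\mathcal{L},\mathcal{K})$ gives $\dim\mathcal{R}(T_2)\le\dim\mathcal{L}$.

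Part (ii) will proceed by the same pattern: Theorem D applied to $\mathcal{R}(\sigma_4)\subset\mathcal{R}(\sigma_3)$ produces $B\in\mathcal{B}(\mathcal{L},\mathcal{K})$ with $\sigma_4=\sigma_3B$; Remark \ref{rem:5} (iii) certifies that $F_1$ is an idempotent and Lemma \ref{classification of idempotent according to range} (v) does the same for $F_2$; block multiplication then yields
\[
F_1F_2=\begin{bmatrix} 0 & 0 \\ \sigma_3 & I_\mathcal{L}\end{bmatrix}\begin{bmatrix} I_\mathcal{K} & B \\ 0 & 0\end{bmatrix}=\begin{bmatrix} 0 & 0 \\ \sigma_3 & \sigma_3B\end{bmatrix}=\sigma.
\]
The dimension inequality $\dim\mathcal{L}_1\le\dim\mathcal{K}$ follows from the analogue $\mathcal{R}(\sigma)=\mathcal{R}(\sigma_3)+\mathcal{R}(\sigma_4)=\mathcal{R}(\sigma_3)$ (the mirror of Lemma \ref{classification of idempotent according to range} (iii)(b), read off directly from the block form) combined with Remark 3.11 (iv) applied to $\sigma_3\in\mathcal{B}(\mathcal{K},\mathcal{L})$. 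I do not anticipate a substantive obstacle: the whole argument rests on one application of Theorem D, followed by routine block bookkeeping and the dimension conventions of Remark 3.11; the only mild care needed is in tracking the domain/codomain of $C$ (resp.\ $B$) so that the Douglas hypothesis applies in the correct direction.
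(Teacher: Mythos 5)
Your proposal is correct and follows essentially the same route as the paper: one application of Theorem D to get $C$ (resp.\ $B$), verification that the displayed block operators are idempotents via Lemma 3.2\,(v) and Remark 3.9\,(iii) (which is exactly what the paper packages as Lemma 4.1), and the density-plus-dimension argument of Remark 3.11 for the inequalities, noting as you did that the statement's ``$\sigma_y$'' should read $\sigma_4$. The only cosmetic point is that Theorem D's $C$ is unique only under the extra nullspace condition, but uniqueness is not needed anywhere in the argument.
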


\begin{proof}
	(i) (a) and (ii) (a) come from direct application of Theorem D and the above lemma \ref{tm:2}.
	
	We only have to prove (ii) (b) and note that (i) (b) is similar. Indeed, $\mathcal{R}(\sigma_3)=\mathcal{R}(\sigma)$ is dense in $\mathcal{L}_1$. But $\mathcal{R}(\sigma_3)$ has dimension less than or equal to that of $\mathcal{K}$. So, ${\rm dim}\mathcal{L}_1\le {\rm dim}\mathcal{K}$.
\end{proof}

%

Example 2.6 (iv) (c) may be adapted to lead us to the following Theorem.

\begin{theorem}\label{tm:3}
	Let $T=\begin{bmatrix}
		T_1&T_2\\
		0&0
	\end{bmatrix}$ in $X=\mathcal{K}\oplus\mathcal{L}$ in some local representation. Suppose 
	\begin{enumerate}[\rm (a)]
		\item 
		$\mathcal{R}(T)=\mathcal{R}(T_1)$ i.e., $\mathcal{R}(T_2)\subset \mathcal{R}(T_1)$ and 
		
		\item 
		$\mathcal{K}\subset_e X$ so that there exists an injective $J \in  \mathcal{B}(\mathcal{K}, \mathcal{L})$ with a closed range $\mathcal{L}_1 \in \hat{\mathcal{K}}$ and a corresponding $J^*$ in $\mathcal{B}(\mathcal{L},\mathcal{K})$ with $J^*J=I_\mathcal{K}$. 
	\end{enumerate}
	Then $T$ is product of two idempotents in $\mathcal{A}$. Moreover, these factorizations are distinct for distinct $J$'s. Indeed, absolute value 1  scalar multiples of $J$ give infinitely many new choices to replace $(J, J^*)$.
\end{theorem}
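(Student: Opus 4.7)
The plan is to imitate the factorization displayed in Example 2.6 (iv)(c), replacing scalars by appropriate operators and using hypothesis (a) to reduce the shape of $T_2$. First, since $\mathcal{R}(T_2)\subset \mathcal{R}(T_1)$, Theorem D supplies a unique $b\in\mathcal{B}(\mathcal{L},\mathcal{K})$ with $T_2=T_1 b$, so the given $T$ takes the form $T=\begin{bmatrix} T_1 & T_1 b \\ 0 & 0\end{bmatrix}$. Guided by Example 2.6 (iv)(c), I would then set
\begin{equation*}
E_1=\begin{bmatrix} I_\mathcal{K} & (T_1-I_\mathcal{K})J^{*}+b \\ 0 & 0 \end{bmatrix},\qquad
E_2=\begin{bmatrix} I_\mathcal{K}-bJ & b-bJb \\ J & Jb \end{bmatrix},
\end{equation*}
and claim $T=E_1E_2$ with $E_1,E_2\in\mathcal{E}$.

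Next I would verify the three assertions by routine block-matrix computation, using only the single algebraic identity $J^{*}J=I_\mathcal{K}$ supplied by hypothesis (b). For $E_1$, being of the form $\begin{bmatrix} I_\mathcal{K} & * \\ 0 & 0\end{bmatrix}$, idempotency is automatic by Lemma 3.2 (v). For $E_2$, the four idempotency equations of Lemma 3.2 (i) reduce to trivial cancellations (for instance, $(I_\mathcal{K}-bJ)^{2}+(b-bJb)J = I_\mathcal{K}-bJ$ and $J(I_\mathcal{K}-bJ)+(Jb)J=J$), with no use of properties of $T_1$ or $b$. For the product, the $(1,1)$ and $(1,2)$ entries of $E_1E_2$ are respectively $(I_\mathcal{K}-bJ)+((T_1-I_\mathcal{K})J^{*}+b)J$ and $(b-bJb)+((T_1-I_\mathcal{K})J^{*}+b)Jb$; a single application of $J^{*}J=I_\mathcal{K}$ collapses these to $T_1$ and $T_1 b=T_2$, while the bottom row is trivially $0$.

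For the final sentence, I would observe that the $(2,1)$-block of $E_2$ is literally $J$, so distinct choices of $J$ automatically produce distinct factorizations (in fact, distinct second factors). If $\lambda\in\mathbb{C}$ with $|\lambda|=1$, then $J':=\lambda J$ is still injective with closed complemented range $\mathcal{L}_1$, and the choice $(J')^{*}:=\bar\lambda J^{*}$ satisfies $(J')^{*}J'=|\lambda|^{2}J^{*}J=I_\mathcal{K}$; thus hypothesis (b) persists for $(J',(J')^{*})$, giving a new valid pair for every point of the unit circle and hence infinitely many distinct factorizations.

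I do not foresee a real obstacle: the entire argument is a guided verification anchored by Theorem D (to produce $b$) and by the one identity $J^{*}J=I_\mathcal{K}$. The only mild subtlety is bookkeeping in the idempotency check for $E_2$, where one must be careful that no implicit assumption on $T_1$ (like invertibility) sneaks in — indeed the formulas for $E_1,E_2$ never invoke a property of $T_1$ at all, which is why the conclusion holds for arbitrary $T_1\in\mathcal{B}(\mathcal{K})$ subject only to the two stated hypotheses.
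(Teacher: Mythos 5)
Your proposal is correct and coincides with the paper's own argument: the paper likewise invokes Theorem D to write $T_2=T_1B$ and then exhibits exactly the same two idempotent factors $U=\begin{bmatrix} I_\mathcal{K} & (T_1-I_\mathcal{K})J^{*}+B\\ 0 & 0\end{bmatrix}$ and $V=\begin{bmatrix} I_\mathcal{K}-BJ & B-BJB\\ J & JB\end{bmatrix}$. Your added verifications (the idempotency of $E_2$ needing no property of $T_1$, the collapse via $J^{*}J=I_\mathcal{K}$, and the distinctness of factorizations via the $(2,1)$-block and unit scalar multiples of $J$) are just the details the paper leaves to the reader.
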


\begin{proof}
	
	
	Because $\mathcal{R}(T_2)\subset \mathcal{R}(T_1)$, by Theorem D, there exists $B\in\mathcal{B}(\mathcal{L},\mathcal{K})$ that satisfies $T_2=T_1B$. Let
	$$U=\begin{bmatrix}
		I_\mathcal{K} & (T_1-I_\mathcal{K})J^*+B\\
		0 & 0
	\end{bmatrix}$$ and let $$V=\begin{bmatrix}
		I_\mathcal{K}-BJ & B-BJB\\
		J & JB
	\end{bmatrix}.$$
%
Then $T$ is the product of idempotents $U$ and $V$.
\end{proof}

\begin{remark}\label{rem:7}
Taking $T_2=0$ in Theorem 4.3, and $\mathcal{R}(B)\subset \mathcal{N}(T_1)$, for instance, $B=0$ in its proof, we arrive at ways to express $\begin{bmatrix}
	T_1 & 0\\
	0 & 0
\end{bmatrix}$ as a product of two idempotents.

\end{remark}

\subsection{Products of two idempotents via Consistency condions.}

Let us first make a few observations some of which look familiar or related to contents of Section 3, particularly Remark 3.5 (vi), Theorem 3.12 and the Remark 3.13.

\begin{theorem}\label{tm:MC}
	Let $0\ne T\in \mathcal{A}$ with a non-zero left annihilator. 
	\begin{enumerate}
		\item[($\alpha$)] 
		Let $T=\begin{bmatrix}
			T_1&T_2\\
			0&0
		\end{bmatrix}$ in $X=\mathcal{K}\oplus \mathcal{L}$ in a local block representation form as in Definition 3.10, which is not an idempotent with range $\mathcal{K}$ (i.e., $T_1\ne I_\mathcal{K})$. Then, $T$ is product of two idempotents if there exist $B\in \mathcal{B}(\mathcal{L},\mathcal{K})$, 	$C\in \mathcal{B}(\mathcal{K},\mathcal{L})$, $D\in \mathcal{B}(\mathcal{L})$, that satisfy any of the following equivalent sets (i), (ii), (iii), (iv) of conditions.
		\begin{enumerate}[\rm (i)]
			\item
			$\mathcal{C}_1$: $S=T_{B,C,D}=\begin{bmatrix}
				T_1-BC & T_2-BD\\
				C & D
			\end{bmatrix}$ is an idempotent.
			\item
			$\mathcal{C}_2$: $\mathcal{C}_1$ together with $TS=T$.
			\item 
			$\mathcal{C}_3$: Let (\ref{eqn1}) to (\ref{eqn6}) be as below.  

			(a) The set $\{(\ref{eqn1}), (\ref{eqn2}), (\ref{eqn3}), (\ref{eqn4})\}$ holds, or
			
			(b) The set $\{(4.1), (4.2), (4.3), (4.4),  (4.5), (4.6)\}$ holds.
			\begin{equation}\label{eqn1}
				(T_1-BC)^2+(T_2-BD)C=T_1-BC
			\end{equation}
			\begin{equation}\label{eqn2}
				(T_1-BC)(T_2-BD)+(T_2-BD)D=T_2-BD
			\end{equation}
			\begin{equation}\label{eqn3}
				C(T_1-BC)+DC=C
			\end{equation}
			\begin{equation}\label{eqn4}
				C(T_2-BD)+D^2=D
			\end{equation}
			\begin{equation}\label{eqn5}
				T_1(T_1-BC)+T_2C = T_1
			\end{equation} \mbox{ and }
			\begin{equation}\label{eqn6}
				T_1(T_2-BD)+T_2D = T_2
			\end{equation}
			\item
			$\mathcal{C}_4$: Let $U=T_1-BC$, $V=T_2-BD$ and (\ref{eqn1prime}) to (\ref{eqn6prime}) be as below. (a) the set of conditions \{(\ref{eqn1prime}), (\ref{eqn2prime}), (\ref{eqn3prime}), (\ref{eqn4prime})\} holds or
			
			(b) The set of conditions $\{(4.1'), (4.2'), (4.3'), (4.4'), (4.6'), (4.6')\}$ holds
			
			\begin{equation}\tag{\ref{eqn1}$'$}\label{eqn1prime}
				VC=(I_\mathcal{K}-U)U \equiv U(I_\mathcal{K}-U)
			\end{equation}
			\begin{equation}\tag{\ref{eqn2}$'$}\label{eqn2prime}
				UV = V (I_\mathcal{L}-D)
			\end{equation}
			\begin{equation}\tag{\ref{eqn3}$'$}\label{eqn3prime}
				CU=(I_\mathcal{L}-D)C
			\end{equation}
			\begin{equation}\tag{\ref{eqn4}$'$}\label{eqn4prime}
				CV= D(I_\mathcal{L}-D)\equiv (I_\mathcal{L}-D)D
			\end{equation}
			\begin{equation}\tag{\ref{eqn5}$'$}\label{eqn5prime}
				T_2C=T_1(I_\mathcal{K}-U)
			\end{equation}
			\begin{equation}\tag{\ref{eqn6}$'$}\label{eqn6prime}
				T_1 V=T_2(I_\mathcal{L}-D)
			\end{equation}
			
	\end{enumerate}
	\item[($\beta$)] 
	For the only if part, there exists some decomposition $X=\mathcal{K}\oplus\mathcal{L}$, a topological sum, with $\mathcal{K}\supset \mathcal{K}_0$ and conditions stated in ($\alpha$) above.
	\end{enumerate}
\end{theorem}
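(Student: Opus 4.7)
The plan is to pivot the whole argument on the specific factorization $T = E_1 S$, where $E_1 = \begin{bmatrix} I_\mathcal{K} & B \\ 0 & 0 \end{bmatrix}$ is always an idempotent by Lemma 3.2~(v) and $S = T_{B,C,D}$ is the candidate second factor. First I would verify by a direct block multiplication that, for any choice of $B, C, D$, the top row of $E_1 S$ equals $(T_1 - BC + BC,\, T_2 - BD + BD) = (T_1, T_2)$ and the bottom row is zero, hence $E_1 S = T$ holds identically. Thus the sole additional requirement for $T$ to be a product of two idempotents is that $S$ itself be idempotent, which is exactly $\mathcal{C}_1$.

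Next I would establish the equivalence of the four sets. Applying Lemma 3.2~(i) to the block form of $S$ yields that $S^2 = S$ is precisely the system (4.1)--(4.4), so $\mathcal{C}_1 \Leftrightarrow \mathcal{C}_3$(a). Assuming $S^2 = S$, multiplication gives $TS = (E_1 S)S = E_1 S = T$, and expanding $TS = T$ in blocks recovers (4.5) and (4.6). Hence (4.5)--(4.6) are consequences of (4.1)--(4.4), yielding $\mathcal{C}_3$(a) $\Leftrightarrow \mathcal{C}_3$(b), and also $\mathcal{C}_1 \Leftrightarrow \mathcal{C}_2$. The system $\mathcal{C}_4$ is simply $\mathcal{C}_3$ rewritten via $U = T_1 - BC$, $V = T_2 - BD$: equation (4.1), i.e.\ $U^2 + VC = U$, rearranges to $VC = U - U^2 = U(I_\mathcal{K} - U) = (I_\mathcal{K} - U) U$, which is (4.1'), and each remaining equation transforms in exactly the same cosmetic way.

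For the converse $(\beta)$, start from any factorization $T = E_1' E_2'$ with $E_1', E_2' \in \mathcal{E}$. Since $\mathcal{R}(E_1')$ is the range of an idempotent, it is a complemented closed subspace, and clearly $\mathcal{R}(T) \subset \mathcal{R}(E_1')$. Choose $\mathcal{K} = \mathcal{R}(E_1')$ and any complement $\mathcal{L}$ (for instance $\mathcal{L} = \mathcal{N}(E_1')$); then $T$ admits the local block representation $T = \begin{bmatrix} T_1 & T_2 \\ 0 & 0 \end{bmatrix}$ in $X = \mathcal{K} \oplus \mathcal{L}$, and by Lemma 3.2~(v) one has $E_1' = \begin{bmatrix} I_\mathcal{K} & B \\ 0 & 0 \end{bmatrix}$ for some $B \in \mathcal{B}(\mathcal{L}, \mathcal{K})$. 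Writing $E_2' = \begin{bmatrix} \ast & \ast \\ C & D \end{bmatrix}$ and matching $E_1' E_2' = T$ forces the unknown top row to be $(T_1 - BC,\, T_2 - BD)$, so $E_2' = T_{B,C,D}$ is an idempotent and $\mathcal{C}_1$ holds. The only genuinely computational step in the whole argument is checking that (4.5)--(4.6) follow from (4.1)--(4.4), which is immediate upon writing $T_1 = (T_1 - BC) + BC$ and invoking (4.1) together with (4.3); I do not anticipate any conceptual obstacle beyond this bookkeeping.
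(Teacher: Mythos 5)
Your proposal is correct and follows essentially the same route as the paper: it hinges on the factorization $T = Q_B\,T_{B,C,D}$ with $Q_B=\begin{bmatrix} I_\mathcal{K} & B\\ 0 & 0\end{bmatrix}$ idempotent (the content of Remark 3.13, which the paper's proof invokes), derives the equivalences by expanding $S^2=S$ and $TS=T$ in blocks and by the substitution $U=T_1-BC$, $V=T_2-BD$, and proves ($\beta$) by taking $\mathcal{K}=\mathcal{R}(Q)$, $\mathcal{L}=\mathcal{N}(Q)$ and matching blocks, exactly as the paper does (your version merely writes out the computations the paper leaves implicit).
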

\begin{proof}
	\begin{enumerate}
		\item[($\alpha$)]
		(i) and (ii) follow from Remark 3.13.\\		
		(iii)
			[\rm (a)]
			For the set \{(\ref{eqn1}), (\ref{eqn2}), (\ref{eqn3}), (\ref{eqn4})\}, it follows from (i) on expanding $S^2=S$. (\ref{eqn5}) and (\ref{eqn6}) follow when we expand $TS=T$.
		(iv) The equation (\ref{eqn1}) can be re-written as $U^2+VC=U$, i.e., $VC=U-U^2=(I_\mathcal{K}=U)U\equiv U(I_\mathcal{K}-U)$ which becomes (\ref{eqn1prime}). 
		
		Similar computations give the remaining part.
		
		\item[($\beta$)]
		 Suppose $T=QQ^\prime$ for some $Q, Q^\prime$ in $\mathcal{E}$. Let $\mathcal{K}=\mathcal{R}(Q)$ and $\mathcal{L}=\mathcal{N}(Q)$.
	\end{enumerate}
\end{proof}

\begin{corollary}
	 Following the notation of Theorem 4.5 above, we have more sets of equivalent conditions: Any one of the sets
	 
	$(\{\ref{eqn3}), (\ref{eqn4}), (\ref{eqn5}), (\ref{eqn6})\}$, $\{(\ref{eqn3}), (\ref{eqn4}), (\ref{eqn1}), (\ref{eqn6})\}$, $\{(\ref{eqn3}), (\ref{eqn4}), (\ref{eqn2}), (\ref{eqn5})\}$ holds. 
	\{(\ref{eqn3prime}), (\ref{eqn4prime}), (\ref{eqn5prime}), (\ref{eqn6prime})\}, \{(\ref{eqn3prime}), (\ref{eqn4prime}), (\ref{eqn1prime}), (\ref{eqn6prime})\}, \{(\ref{eqn3prime}), (\ref{eqn4prime}), (\ref{eqn2prime}), (\ref{eqn5prime})\} holds.
\end{corollary}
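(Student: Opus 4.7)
My plan is to show that, in the notation of Theorem 4.5, equations (4.3) and (4.4) force the two biconditionals (4.1) $\Leftrightarrow$ (4.5) and (4.2) $\Leftrightarrow$ (4.6). Granted this, each of the three unprimed sets listed in the corollary consists of (4.3), (4.4), one equation from the pair $\{(4.1),(4.5)\}$, and one from the pair $\{(4.2),(4.6)\}$, so each is equivalent to the set $\{(4.1),(4.2),(4.3),(4.4)\} = \mathcal{C}_3\text{(a)}$, which Theorem 4.5 already tells us is equivalent to the full system $\mathcal{C}_3\text{(b)}$.

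The one genuine step is a short algebraic identity. Writing $T_1 = U + BC$ and $T_2 = V + BD$, where $U = T_1 - BC$ and $V = T_2 - BD$ as in $\mathcal{C}_4$, I expand
\begin{equation*}
T_1 U + T_2 C - T_1 \;=\; U^{2} + B(CU + DC - C) + VC - U .
\end{equation*}
In the presence of (4.3), the bracketed expression vanishes and the right side collapses to $U^{2} + VC - U$. Hence, modulo (4.3), equation (4.5) holds if and only if (4.1) holds. A parallel computation gives
\begin{equation*}
T_1 V + T_2 D - T_2 \;=\; UV + B(CV + D^{2} - D) + VD - V ,
\end{equation*}
which, under (4.4), collapses to $UV + VD - V$. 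So, modulo (4.4), equation (4.6) holds if and only if (4.2) holds.

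The primed statements in $\mathcal{C}_4$ are literal rewritings of the unprimed statements in the $U,V$ notation, so the three primed sets follow verbatim from the unprimed case once the above two identities are noted. I do not foresee any real obstacle; the entire argument reduces to the two algebraic identities displayed above, and the remaining content is bookkeeping with the substitutions $T_1 = U + BC$ and $T_2 = V + BD$.
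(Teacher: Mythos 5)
Your proposal is correct and follows essentially the same route as the paper: both establish, via the identity $\mathrm{LHS}(4.5)-\mathrm{LHS}(4.1)=B\,[C(T_1-BC)+DC-C]$ and its analogue for $(4.6)$ and $(4.2)$, that $(4.3)$ makes $(4.1)\Leftrightarrow(4.5)$ and $(4.4)$ makes $(4.2)\Leftrightarrow(4.6)$, then reduce every listed set to $\{(4.1),(4.2),(4.3),(4.4)\}$. Your $U,V$ bookkeeping and the remark that the primed sets are mere rewritings are just the paper's argument in slightly different notation.
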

\begin{proof}
	\begin{enumerate}
	\item[(a)] We rewrite equations (4.1) to (4.6) by making needed shifting to make R.H.S zero in each and call the new equations by  the same numbering for notational convenience. We note that L.H.S of (\ref{eqn5})$-$L.H.S of (\ref{eqn1})$= B[C(T_1-BC)+DC-C]$, So, if (\ref{eqn3}) holds then (\ref{eqn1}) and (\ref{eqn5}) become equivalent.
	\item[(b)]
	Now, L.H.S of (\ref{eqn6})$-$(\ref{eqn2})$=B[C(T_2-BD)+D^2-D].$ So if (\ref{eqn4}) holds then (\ref{eqn2}) and (\ref{eqn6}) become equivalent.
	\item[(c)]
	Remaining part of (iii) now follows from (a), (b) and (c) above. To elaborate, the system $\{(4.1), (4.2), (4.3), (4.4)\}$ is equivalent to the system where (4.1) is replaced by (4.5) or/and (4.2) is replaced by (4.6). This gives that the  remaining three systems in (iii) are as good as the first one.
	\end{enumerate}
\end{proof}

\begin{remark}
	We now proceed to identify $T$'s which satisfy the conditions set out in Theorem \ref{tm:MC} and figure out their factorizations into two idempotents. We work out details for some specific cases. For this purpose, we have already discarded  $(C,D)=(0,0)$ in Remark 3.13 (ii).  Also we have seen a situation in Theorem \ref{cor:1} above, viz., $\mathcal{R}(T_1)\subset \mathcal{R}(T_2)$ for which $B=T_2$, $D=I_\mathcal{L}$, and $C$ is such that $T_1=T_2C$ does work. We give more factoriztions in the results that follow.
	
%
\end{remark}


The proof of the following theorem follows from Remark 3.13 and direct computations.

\begin{theorem}\label{tm:9}
	Let $T=\begin{bmatrix}
		T_1 & T_2\\
		0 & 0
	\end{bmatrix}$  in $X=\mathcal{K}\oplus \mathcal{L}$ in some local block representation with $T_1\ne I_\mathcal{K}$ and $\mathcal{N}(T_1)\ne \{0\}$.
	 
	\begin{enumerate}[\rm (i)]
		\item Suppose $\mathcal{R}(T_1)\subset \mathcal{R}(T_2)$ and $C\in\mathcal{B}(\mathcal{L},\mathcal{K})$ as given by Theorem D to satisfy $T_1=T_2C$ and $\mathcal{N}(C)=\mathcal{N}(T_1)$. Let $V\in\mathcal{B}(\mathcal{L},\mathcal{K})$ with $\mathcal{R}(V)\subset \mathcal{N}(T_1)$. Then $T$ can be expressed as the product $E_1E_2$ of two idempotents $E_1=\begin{bmatrix}
			I_\mathcal{K} & T_2-V\\
			0 & 0
		\end{bmatrix}$, $E_2=\begin{bmatrix}
			VC & V\\
			C & I_\mathcal{L}
		\end{bmatrix}$. Furthermore, there are uncountably many $V$'s and therefore, uncountably many factorizations corresponding to these $V$'s.
		\item If $T_1$ is an idempotent, then $T$ can be expressed as a product of two idempotents in uncountable number of ways: $T=E_1E_2$, $E_1=\begin{bmatrix}
				I_\mathcal{K} & T_2-V\\
				0 & 0
			\end{bmatrix}$, and $E_2=\begin{bmatrix}
				T_1+VC & V\\
				C & I_\mathcal{L}
			\end{bmatrix}$, where $V\in\mathcal{B}(\mathcal{L}, \mathcal{K})$ and $C\in \mathcal{B}(\mathcal{K},\mathcal{L})$ satisfying $\mathcal{R}(V)\subset \mathcal{N}(T_1),(\mathcal{R}(V)\cup\mathcal{R}(T_1))\subset\mathcal{N}(C)$ and $\mathcal{R}(C)\subset \mathcal{N}(T_2)$. 
		\end{enumerate}
	\end{theorem}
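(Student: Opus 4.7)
The plan is to verify both parts by direct block-matrix computation, using the dictionary between range/null-space inclusions and operator identities.

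For part (i), I would first note that $E_1$ is automatically an idempotent, since it has the shape $\begin{bmatrix} I_\mathcal{K} & B \\ 0 & 0 \end{bmatrix}$ handled by Lemma 3.2 (v). Next, I would impose $E_2^2 = E_2$ via the four block identities of Lemma 3.2 (i); all four reduce to the single relation $CV = 0$, because from $CV=0$ the products $VCV$, $CVC$, and $(VC)^2$ vanish as well. That single relation follows from the hypotheses, since $\mathcal{R}(V) \subset \mathcal{N}(T_1) = \mathcal{N}(C)$ forces $CV = 0$. Finally, multiplying blocks gives
\[
E_1 E_2 = \begin{bmatrix} VC + (T_2 - V)C & V + (T_2 - V) \\ 0 & 0 \end{bmatrix} = \begin{bmatrix} T_2 C & T_2 \\ 0 & 0 \end{bmatrix},
\]
which is $T$ by the defining property $T_2 C = T_1$.

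For part (ii), $E_1$ is again an idempotent automatically. Checking $E_2^2 = E_2$ reduces to the four identities $(T_1 + VC)^2 = T_1$, $(T_1 + VC)V = 0$, $C(T_1 + VC) = 0$, and $CV = 0$. The three range-inclusion hypotheses translate into $T_1 V = 0$, $CV = 0$, and $CT_1 = 0$; combined with $T_1^2 = T_1$, each of the four identities collapses upon expansion (for instance, $(T_1+VC)^2 = T_1^2 + T_1 VC + VCT_1 + VCVC = T_1$). Multiplying the two blocks,
\[
E_1 E_2 = \begin{bmatrix} T_1 + VC + (T_2 - V)C & T_2 \\ 0 & 0 \end{bmatrix} = \begin{bmatrix} T_1 + T_2 C & T_2 \\ 0 & 0 \end{bmatrix},
\]
which collapses to $T$ because $\mathcal{R}(C) \subset \mathcal{N}(T_2)$ forces $T_2 C = 0$.

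For the uncountable abundance of factorizations I would exploit $\mathcal{N}(T_1) \ne \{0\}$: fix a nonzero $v_0 \in \mathcal{N}(T_1)$ and, by Hahn--Banach, a nonzero functional $\phi \in \mathcal{L}^*$, and set $V_\lambda(l) = \lambda\,\phi(l)\,v_0$ for $\lambda$ ranging over the scalar field. Each $V_\lambda$ satisfies $\mathcal{R}(V_\lambda) \subset \mathcal{N}(T_1)$, and in case (ii) one can further arrange $v_0 \in \mathcal{N}(C)$ by first picking a suitable $C$ (even $C=0$ suffices) and then choosing $v_0$ inside the nonzero space $\mathcal{N}(T_1) \cap \mathcal{N}(C)$. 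Distinct values of $\lambda$ produce distinct upper-right corners $T_2 - V_\lambda$ of $E_1$, hence genuinely distinct factorizations. Since the whole argument is routine block algebra, the only real obstacle is bookkeeping: ensuring that each range/null-space hypothesis is invoked in precisely the right place to annihilate the correct cross term.
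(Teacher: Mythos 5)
Your proposal is correct and follows essentially the same route as the paper, whose proof is simply the statement that direct computations give the result (with the relevant identities $CV=0$, $T_1V=0$, $CT_1=0$, $T_2C=0$ extracted from the range/null-space hypotheses exactly as you do, in the spirit of Remark 3.13 and the consistency-condition discussion). Your explicit verification of the idempotency of $E_2$ and of $E_1E_2=T$, together with the rank-one family $V_\lambda(l)=\lambda\,\phi(l)\,v_0$ giving uncountably many distinct factorizations, fills in precisely the computations the paper leaves implicit.
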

\begin{remark}
	We start with the case $D=I_\mathcal{L}$ and look for more solutions envisaged in Theorem \ref{tm:MC} (iv), which becomes
	
	\begin{enumerate}[\rm (i)]
		\item 
		$V=T_2-BD=T_2-B$. So, $B=T_2-V$.
		
		\item 
		\begin{align*}
			U & = T_1-BC\\
			& = T_1-(T_2-V)C\\
			& = (T_1-T_2C)+VC\\
			& = W+VC,
		\end{align*}
		where $W=T_1-T_2C$.
		
		\item 
		The system (\ref{eqn1prime}) to (\ref{eqn6prime}) becomes 
		
		\begin{align}
			& VC = U(I_\mathcal{K}-U)\tag{\ref{eqn1}$^{\prime\prime}$}\label{eqn1Dprime}\\
			& WV+VCV=0\tag{\ref{eqn2}$^{\prime\prime}$}\label{eqn2Dprime}\\
			& CW+CVC=0\tag{\ref{eqn3}$^{\prime\prime}$}\label{eqn3Dprime}\\
			& CV=0\tag{\ref{eqn4}$^{\prime\prime}$}\label{eqn4Dprime}\\
			& T_1W+T_1VC=W\tag{\ref{eqn5}$^{\prime\prime}$}\label{eqn5Dprime}\\
			& T_1V=0.\tag{\ref{eqn6}$^{\prime\prime}$}\label{eqn6Dprime}
		\end{align}
		
		\item 
		Now (\ref{eqn3Dprime}) and (\ref{eqn4Dprime}) can be replaced by (\ref{eqn3Tprime}) and (\ref{eqn4Dprime}), with (\ref{eqn3Tprime}) as below.
		
		\begin{equation}\tag{\ref{eqn3}$^{\prime\prime\prime}$}\label{eqn3Tprime}
			CW=0.
		\end{equation}
		Also, (\ref{eqn5Dprime}) and (\ref{eqn6Dprime}) can be replaced by (\ref{eqn5Tprime}) and (\ref{eqn6Dprime}) with (\ref{eqn5Tprime}) as below.
		
		\begin{equation}\tag{\ref{eqn5}$^{\prime\prime\prime}$}\label{eqn5Tprime}
			T_1W=W,\ \text{i.e.,}\ (I_\mathcal{K}-T_1)W=0.
		\end{equation}
		
		\item 
		We may concentrate on (\ref{eqn3Tprime}), (\ref{eqn4Dprime}), (\ref{eqn5Tprime}), and (\ref{eqn6Dprime}).
		
		Let $V\in\mathcal{B}(\mathcal{L},\mathcal{K})$ with $\mathcal{R}(V)\subset \mathcal{N}(T_1)$ keeping (\ref{eqn6Dprime}) in mind. We note that $V=0$ if $\mathcal{N}(T_1)=\{0\}$. Next let $C\in\mathcal{B}(\mathcal{K},\mathcal{L})$ that satisfies $\mathcal{R}(V)\subset \mathcal{N}(C)$ keeping (\ref{eqn4Dprime}) in mind. The conditions (\ref{eqn3Tprime}) and (\ref{eqn5Tprime}) are  written as 
		\begin{align}
			& C(T_1-T_2C)=0\tag{C1}\label{eqnC1}\\
			& (I_\mathcal{K}-T_1)(T_1-T_2C)=0\tag{C2}\label{eqnC2}
		\end{align}
		and are called consistency conditions.
		
		Suppose (\ref{eqnC1}) and (\ref{eqnC2}) are satisfied. Then, using the fact that $CV=0$ and $T_1V=0$, we get that $T=E_1E_2$, a product of two idempotents $E_1=\begin{bmatrix}
			I_\mathcal{K} & T_2-V\\
			0&0
		\end{bmatrix}$, $E_2=\begin{bmatrix}
			T_1-T_2C+VC & V\\
			C & I_\mathcal{L}
		\end{bmatrix}$, which are distinct if $V$'s are so. For that $\mathcal{N}(T_1)\ne \{0\}$ is necessary.
		
		\item 
		We record suitable obvious subcases for which the consistency conditions (\ref{eqnC1}) and (\ref{eqnC2}) are satisfied.
		
		\begin{enumerate}[\rm (a)]
			\item 
			$T_1-T_2C=0$
			
			\item 
			$T_2C=0, CT_1=0,$ and $T_1$ is an idempotent.
		\end{enumerate}
		\item 
		Now (vi) (a) holds for $C$ $\Leftrightarrow$ $\mathcal{R}(T_1)\subset \mathcal{R}(T_2)$ by applying Theorem D.  In this case, there exists a unique $C$ satisfying $\mathcal{N}(T_1)=\mathcal{N}(C)$.  We fix such a $C$ now. So, all we have to do is to choose any $V\in\mathcal{B}(\mathcal{L},\mathcal{K})$ with $\mathcal{R}(V)\subset \mathcal{N}(T_1)$.
		
		We note that all this is possible only if ${\rm dim}\mathcal{K}_1\le {\rm dim}\mathcal{L}$. As already noted $V=0$ if $\mathcal{N}(T_1)=\{0\}$ and there are uncountably many $V$'s available if $\mathcal{N}(T_1)\ne \{0\}$.
		
		\item 
		Next (vi) (b) just requires extra case in (v) while choosing $C$, we have to make such that $\mathcal{N}(C)$ contains $\mathcal{R}(V)\cup\mathcal{R}(T_1)$ and $\mathcal{R}(C)\subset \mathcal{N}(T_2)$.
		
		Now we come to the case that $T_1$ is an idempotent. So, $\mathcal{N}(T_1)=\{0\}\Leftrightarrow T_1=I_\mathcal{K}$. But if $T_1=I_\mathcal{K}$ then $T$ is an idempotent and we may ignore this case and ensure $\mathcal{N}(T_1)\ne \{0\}$.
		
		We note that if $\mathcal{N}(T_2)=\{0\}$, then $C=0$.
	\end{enumerate}
\end{remark}

\begin{theorem}\label{tm:12}
	Let $T=\begin{bmatrix}
		T_1&T_2\\
		0 & 0
	\end{bmatrix}$ in $X=\mathcal{K}\oplus \mathcal{L}$ in some local block representation. Suppose that 
	\begin{enumerate}[\rm (a)]
		\item 
		$T_1$ is an idempotent $\ne I_\mathcal{K}$ and 
		
		\item 
		Either $\mathcal{N}(T_2)\ne \{0\}$ or $\mathcal{R}(T_1)\cap\mathcal{R}(T_2)\ne\{0\}$.
		
		\item 
		${\rm dim}\mathcal{L}\ge 2$.
	\end{enumerate}
	Then we have the following.
	\begin{enumerate}[\rm (i)]
		\item 
		There exist uncountably many idempotents $D$ with $\{0\}\ne\mathcal{N}(D)\ne\mathcal{L}$ and $\mathcal{N}(D)\subset\mathcal{N}(T_2-T_1T_2)$.
		
		\item 
		Let $D$ be as in (i) above. Then $T$ can be expressed as a product of two idempotents as follows.
		\begin{enumerate}[\rm (a)]
			\item 
			$T=E_1E_2$,
			
			\item 
			$T=E_1^\prime E_2$,
		\end{enumerate}
		where 
		$$E_1=\begin{bmatrix}
			I_\mathcal{K} & T_2\\
			0 & 0
		\end{bmatrix},\ E_2=\begin{bmatrix}
		T_1 & T_2(I_\mathcal{L}-D)\\
		0 & 0
		\end{bmatrix}\ \mbox{and}\ E_1^\prime=\begin{bmatrix}
		I_\mathcal{K} & T_2D\\
		0 & 0
		\end{bmatrix}.$$
		
		\item 
		$E_1=E_1^\prime\Leftrightarrow\mathcal{N}(D)\subset\mathcal{N}(T_2)\implies\mathcal{N}(T_2)\ne\{0\}$.
		
		\item 
		For distinct $D$'s as in (i) above, the corresponding $E_2$'s are distinct.
		
		\item 
		$T$ can be expressed as a product of two idempotents in uncountably many ways.
	\end{enumerate}
\end{theorem}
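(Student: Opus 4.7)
First, I would establish part (i) by locating the subspace $\mathcal{M} := \mathcal{N}(T_2 - T_1 T_2)$. Because $T_1^2 = T_1$, one has $T_2 - T_1 T_2 = (I_\mathcal{K} - T_1) T_2$ and $\mathcal{N}(I_\mathcal{K} - T_1) = \mathcal{R}(T_1)$, so $\mathcal{M} = T_2^{-1}(\mathcal{R}(T_1))$. Hypothesis (b) guarantees that $\mathcal{M}$ contains a nonzero vector $l_0$: directly if $\mathcal{N}(T_2) \ne \{0\}$, since $\mathcal{N}(T_2) \subset \mathcal{M}$; otherwise by picking $l_0$ with $0 \ne T_2 l_0 \in \mathcal{R}(T_1)$, which forces $l_0 \ne 0$ and $l_0 \in \mathcal{M}$.

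Next, to produce uncountably many admissible $D$'s, I would use Hahn--Banach to pick a continuous functional $\phi \in \mathcal{L}^*$ with $\phi(l_0) = 1$ and define the rank-one perturbation $D_\phi \in \mathcal{B}(\mathcal{L})$ by $D_\phi(l) = l - \phi(l) l_0$. A direct check gives $D_\phi^2 = D_\phi$ and $\mathcal{N}(D_\phi) = \mathbb{C} l_0 \subset \mathcal{M}$; the hypothesis $\dim \mathcal{L} \ge 2$ then ensures $\mathcal{N}(D_\phi) \ne \mathcal{L}$. The set of admissible $\phi$ is an affine translate of the annihilator of $\mathbb{C} l_0$ in $\mathcal{L}^*$, which is nonzero by Hahn--Banach applied to $\mathcal{L}/\mathbb{C} l_0$ (again using $\dim \mathcal{L} \ge 2$), hence contains a continuum of elements under scalar multiplication. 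Distinct $\phi$ clearly produce distinct $D_\phi$, giving (i).

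The heart of (ii) is the algebraic identity $(I_\mathcal{K} - T_1) T_2 (I_\mathcal{L} - D) = 0$, which I would derive from the chain of inclusions $\mathcal{R}(I_\mathcal{L} - D) = \mathcal{N}(D) \subset \mathcal{M} = T_2^{-1}(\mathcal{R}(T_1)) = T_2^{-1}(\mathcal{N}(I_\mathcal{K} - T_1))$. Equivalently, $T_1 T_2 (I_\mathcal{L} - D) = T_2 (I_\mathcal{L} - D)$. Combined with $T_1^2 = T_1$ and $D^2 = D$, this identity lets one verify via Lemma \ref{classification of idempotent according to range} (i) that $E_2$ is an idempotent, and a routine block multiplication then shows $E_1 E_2 = T$ and $E_1' E_2 = T$. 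For (iii), $E_1 = E_1'$ reduces to $T_2 = T_2 D$, i.e., $T_2(I_\mathcal{L} - D) = 0$, which via $\mathcal{N}(D) = \mathcal{R}(I_\mathcal{L} - D)$ is equivalent to $\mathcal{N}(D) \subset \mathcal{N}(T_2)$; this manifestly forces $\mathcal{N}(T_2) \supset \mathcal{N}(D) \ne \{0\}$. For (iv), the idempotent $D$ is recoverable from $E_2$, so distinct $D$'s produce distinct $E_2$'s; finally (v) combines (i), (ii), and (iv).

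The main delicate point is the parameterization in (i): one must exhibit uncountably many admissible $D$'s, and here the assumption $\dim \mathcal{L} \ge 2$ is essential, as it guarantees a nontrivial annihilator of $\mathbb{C} l_0$ in $\mathcal{L}^*$ and thereby a continuum of rank-one perturbations $D_\phi$ with the required kernel condition. Once this is done, the remaining content is the single identity $(I_\mathcal{K} - T_1) T_2 (I_\mathcal{L} - D) = 0$ together with two block multiplications; everything is transparent from the structure imposed on $D$ in (i).
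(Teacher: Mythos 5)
Your construction for part (i) is correct and in fact more explicit than anything in the paper: the identification $\mathcal{N}(T_2-T_1T_2)=T_2^{-1}(\mathcal{R}(T_1))$ (using $\mathcal{N}(I_\mathcal{K}-T_1)=\mathcal{R}(T_1)$ for the idempotent $T_1$), the use of hypothesis (b) to produce $l_0\ne 0$ in that subspace, and the Hahn--Banach family $D_\phi(l)=l-\phi(l)l_0$ of idempotents with kernel $\mathbb{C}l_0$ all check out; moreover your key identity $(I_\mathcal{K}-T_1)T_2(I_\mathcal{L}-D)=0$ is exactly the consistency condition the paper works with (Remark 4.11(v)), so in spirit your route is the paper's route, just with the ``direct computations'' of the paper's one-line proof spelled out.

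The gap is at the step you dismiss as ``a routine block multiplication.'' For the matrix $E_2$ exactly as printed in the statement, with $0$ in the $(2,2)$ block, that multiplication does \emph{not} give $T$: one gets $E_1E_2=\begin{bmatrix} T_1 & T_2(I_\mathcal{L}-D)\\ 0 & 0\end{bmatrix}=E_2$, which equals $T$ only when $T_2D=0$; similarly $D$ is not recoverable from that $E_2$ (if $l_0\in\mathcal{N}(T_2)$, all of your $E_2$'s coincide), so (iv) and (v) would fail too. The theorem is true for $E_2=\begin{bmatrix} T_1 & T_2(I_\mathcal{L}-D)\\ 0 & D\end{bmatrix}$, which is the matrix appearing in the paper's own Remark 4.11(v); the $0$ in the theorem's display is evidently a typo. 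With $D$ in the $(2,2)$ corner, $E_2^2=E_2$ (this is where both $D^2=D$ and the consistency identity are needed), $E_1E_2=E_1'E_2=T$, and $D$ is literally the $(2,2)$ block of $E_2$, so your arguments for (ii)--(v) then go through verbatim. Your write-up implicitly assumes this corrected $E_2$ (you invoke $D^2=D$ and ``$D$ is recoverable from $E_2$''), but you neither flag the discrepancy nor actually perform the multiplications; as written, the verification of (ii) and (iv) is asserted for matrices for which it fails. You need to state the corrected $E_2$ explicitly and carry out the three short computations $E_2^2=E_2$, $E_1E_2=T$, $E_1'E_2=T$.
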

\begin{proof}\label{eg:9}
	Once again direct computations  give  the results. 
\end{proof}

 In the next remark we give some more explanations related to the above results.
	
\begin{remark}	
	We now consider $T$ defined on $X=\mathcal{K}\oplus \mathcal{L}$, $T_1\ne I_\mathcal{K}$ so that $T$ is not an idempotent with range equal to $\mathcal{K}$ and consider the case $C=0$, $D\ne0$ other than those considered above.
	\begin{enumerate}[\rm (i)]
		\item 
		So $0\ne D\ne I_\mathcal{L}$.
		
		\item
		We work in the context of Theorem 4.5 (iv). So, $U=T_1$ and the system (\ref{eqn1prime})-(\ref{eqn6prime}) becomes 
		\begin{align}
			& T_1(I_\mathcal{K}-U)=0,\tag{I}\label{eqnI}\\
			& T_1V = V(I_\mathcal{L}-D),\tag{II}\label{eqnII}
		\end{align}
		(\ref{eqn3prime}) is satisfied,
		\begin{align}
			& (I_\mathcal{L}-D)D=0\tag{IV}\label{eqnIV}\\
			& T_1(I_\mathcal{K}-T_1)=0\tag{V$\equiv$I}\label{eqnV}\\
			& T_1V=T_2(I_\mathcal{L}-D).\tag{VI}\label{eqnVI}
		\end{align}
		
		\item 
		(\ref{eqnI}) holds $\Leftrightarrow$ $T_1$ is an idempotent. In view of $T_1\ne I_\mathcal{K}$, either $T_1$ is zero or ${\rm dim}(\mathcal{K})\ge 2$. Also, (\ref{eqnIV}) holds $\Leftrightarrow$ $D$ is an idempotent. In this case, (i) forces ${\rm dim}(\mathcal{L})\ge2$. 
		
		(\ref{eqnVI}) can be rewritten as 
		\begin{equation}\tag{VI$^\prime$}\label{eqnVIprime}
			T_1T_2-T_2=(T_1B-T_2)D.
		\end{equation}
		In other words, the system is $T_1$ and $D$ are idempotents, $T_1\ne I_\mathcal{K}$, $0\ne D\ne I_\mathcal{L}$ and (\ref{eqnVIprime}) holds. We may call (\ref{eqnVIprime}) as consistency condition C3.
		
		\item 
		Let $T,B,D$ be as in (iii) above satisfying C3. Then $T$ is the product $E_1E_2$ of two idempotents
		$$E_1=\begin{bmatrix}
			I_\mathcal{K} & B\\
			0 & 0
		\end{bmatrix},\quad E_2=\begin{bmatrix}
			T_1 & T_2-BD\\
			0 & D
		\end{bmatrix}.$$
		
		\item 
		For $B=T_2$ or $T_2D$, we have $BD=T_2D$. So, RHS of (\ref{eqnVIprime}) = $T_1T_2D-T_2D=(T_1T_2-T_2)D$. So, (\ref{eqnVIprime}) is satisfied if and only if
		\begin{align}
			& (I_\mathcal{K}-T_1)T_2(I_\mathcal{L}-D)=0\nonumber\\
			\Leftrightarrow & \mathcal{N}(D)=\mathcal{R}(I_\mathcal{L}-D)\subset \mathcal{N}((I_\mathcal{K}-T_1)T_2)\tag{VI$^{\prime\prime\prime}$}\label{eqnVITprime}.
		\end{align}	
		Because $\{0\}\ne \mathcal{N}(D)\ne \mathcal{L}$, this requires 
		\begin{align*}
			& \mathcal{N}((I_\mathcal{K}-T_1)T_2)\ne \{0\}\\
			\Leftrightarrow & \exists x\ne 0\ \mbox{with}\ T_2x=t_1T_2x\\
			\Leftrightarrow & \exists x\ne 0\ \mbox{with}\ T_2x\in\mathcal{R}(T_1)\\
			\Leftrightarrow & \mathcal{N}(T_2)\ne \{0\}\ \mbox{or}\ \mathcal{R}(T_2)\cap\mathcal{R}(T_1)\ne \{0\}.
		\end{align*}
		Also, if this holds we may consider an idempotent $D$ that satisfies (\ref{eqnVITprime}). There are uncountably many such $D$'s. The corresponding idempotents are 
		$$E_1=\begin{bmatrix}
			I_\mathcal{K} & T_2\\
			0 & 0
		\end{bmatrix},\quad E_2=\begin{bmatrix}
			T_1 & T_2-T_2D\\
			0 & D
		\end{bmatrix}$$
		and 
		$$E_1^\prime=\begin{bmatrix}
			I_\mathcal{K} & T_2D\\
			0 & 0
		\end{bmatrix}, \quad E_2^\prime=\begin{bmatrix}
			T_1 & T_2-T_2D\\
			0 & D
		\end{bmatrix}.$$
		So, $E_2=E_2^\prime$ but $E_1=E_1^\prime$ $\Leftrightarrow$ $T_2=T_2D$ i.e., $T_2(I_\mathcal{L}-D)=0$ i.e., $\mathcal{N}(D)=\mathcal{R}(I_\mathcal{L}-D)\subset \mathcal{N}(T_2)$.
		In any case, $E_2$'s are different for different $D$'s 
	\end{enumerate}
\end{remark}

\begin{example}\label{eg:16}
	We continue with our discussion in the context of Remarks 4.8.and 4.10. We now come to the case $C\ne 0$, $D\ne 0$, $D\ne I_\mathcal{L}$, $C$ and $D$ both invertible.
	\begin{enumerate}[\rm (i)]
		\item 
		This requires 
		\begin{enumerate}[\rm (a)]
			\item 
			${\rm dim}\mathcal{K}={\rm dim}\mathcal{L}$ and
			
			\item 
			if ${\rm dim}X=n$ then this forces $n$ even and ${\rm rank}(T)=\frac{n}{2}$.
		\end{enumerate}

		\item 
		We work in the context of  Theorem \ref{tm:MC} (iv). Then (\ref{eqn1prime}) to (\ref{eqn4prime}) become equivalent to
		\begin{align}
			V & = U(I_\mathcal{K}-U)C^{-1}\tag{$\alpha$}\label{eqnalpha}\\
			V & = (I_\mathcal{K}-U)VD^{-1}\tag{$\beta$}\label{eqnbeta}\\
			U & = C^{-1}(I_\mathcal{L}-D)C\tag{$\gamma$}\label{eqngamma}\\
			  & = I_\mathcal{K}-C^{-1}DC\nonumber\\
			V & = C^{-1}D(I_\mathcal{L}-D)\tag{$\delta$}\label{eqndelta}.
		\end{align}
		Let us assume (\ref{eqngamma}) and (\ref{eqndelta}).
		
		Then RHS of (\ref{eqnalpha}) is
		\begin{equation*}
			(I_\mathcal{K}-C^{-1}DC)C^{-1}DCC^{-1}=C^{-1}D-C^{-1}D^2=C^{-1}D(I_\mathcal{L}-D)=V=\mbox{LHS of (\ref{eqnalpha})}.
		\end{equation*}
		So, (\ref{eqnalpha}) holds.
		
		\begin{equation*}
			\mbox{RHS of (\ref{eqnbeta})}=(C^{-1}DC)C^{-1}D(I_\mathcal{L}-D)D^{-1}=C^{-1}D(I_\mathcal{L}-D)=V=\mbox{LHS of (\ref{eqnbeta})}.
		\end{equation*}
		So, (\ref{eqnbeta}) is satisfied.
		
		Thus the system can be replaced by (\ref{eqngamma}) and (\ref{eqndelta}).
		
		\item 
		Let us assume (\ref{eqngamma}) and (\ref{eqndelta}) hold. Then $T_1-BC=U=I_\mathcal{K}-C^{-1}DC$. So, $BC=T_1-I_\mathcal{K}+C^{-1}DC$. Thus $B=T_1C^{-1}-C^{-1}+C^{-1}D=T_1C^{-1}-C^{-1}(I_\mathcal{L}-D)$. Now, $T_2-BD=V=C^{-1}D(I_\mathcal{L}-D)$. So, $BD=T_2-C^{-1}(I_\mathcal{L}-D)D$. Thus, $B=T_2D^{-1}-C^{-1}(I_\mathcal{L}-D)$. Hence, we get a consistency condition $T_1C^{-1}-C^{-1}(I_\mathcal{L}-D)=T_2D^{-1}-C^{-1}(I_\mathcal{L}-D)$ i.e., $T_1C^{-1}=T_2D^{-1}$. 
		
		In particular, $\mathcal{R}(T_1)=\mathcal{R}(T_2)$. 
		
		Hence we may state the consistency condition as:
		\begin{quote}
			For some invertible operators $C^\prime\in\mathcal{B}(\mathcal{L},\mathcal{K})$ and $D^\prime\in\mathcal{B}(\mathcal{L})$,
			\begin{equation}\tag{C5}\label{eqnC5}
				 T_1C^\prime=T_2D^\prime.
			\end{equation}
		\end{quote}
		Then we may choose $C=C^{\prime^{-1}},\ D=D^{\prime^{-1}},\ B=T_1C^{-1}-C^{-1}(I_\mathcal{L}-D)$. Thus, we almost have the following theorem.
	\end{enumerate}
\end{example}

\begin{theorem}\label{tm:16}
	Let $T=\begin{bmatrix}
		T_1 & T_2\\
		0 & 0
	\end{bmatrix}$ in $X=\mathcal{K}\oplus \mathcal{L}$, and $T_1\ne I_\mathcal{K}$. Suppose $T_1C^{-1}=T_2D^{-1}$ for some invertible operators $C\in\mathcal{B}(\mathcal{K},\mathcal{L})$, $D\in\mathcal{B}(\mathcal{L})$. Then we have the following.
	\begin{enumerate}[\rm (i)]
		\item 
		${\rm dim}\mathcal{K}={\rm dim}\mathcal{L}$ and $\mathcal{R}(T_1)=\mathcal{R}(T_2)$.
		
		\item 
		$T$ is expressible as a product of two idempotents: $T=E_1E_2$,
		
		 $E_1=\begin{bmatrix}
			I_\mathcal{K} & T_1C^{-1}-C^{-1}(I_\mathcal{L}-D)\\
			0 & 0
		\end{bmatrix},\ E_2=\begin{bmatrix}
		I_\mathcal{K}-C^{-1}DC & C^{-1}(I_\mathcal{L}-D)D\\
		C & D
		\end{bmatrix}.$
		
		\item 
		If ${\rm dim}\mathcal{L}$ ($={\rm dim}\mathcal{K}$) $\ge 2$ then (ii) can be done in uncountably many ways.
	\end{enumerate}
\end{theorem}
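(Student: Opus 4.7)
The proof splits along the three parts. For (i), since $C\in\mathcal{B}(\mathcal{K},\mathcal{L})$ is invertible it is a bicontinuous linear bijection, so $\dim\mathcal{K}=\dim\mathcal{L}$. The hypothesis rewrites as $T_1 = T_2 D^{-1}C$ and $T_2 = T_1 C^{-1}D$. Because $D^{-1}C : \mathcal{K} \to \mathcal{K}$ and $C^{-1}D : \mathcal{L} \to \mathcal{L}$ are bijective, taking ranges yields $\mathcal{R}(T_1)\subseteq\mathcal{R}(T_2)$ and $\mathcal{R}(T_2)\subseteq\mathcal{R}(T_1)$, whence equality.

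For (ii), the construction is already motivated by Example 4.10, and the real content is to formally verify that $E_1$ and $E_2$ as stated are idempotents and that $E_1 E_2 = T$. First, $E_1$ has the shape $\begin{bmatrix} I_\mathcal{K} & B \\ 0 & 0 \end{bmatrix}$ with $B = T_1 C^{-1}-C^{-1}(I_\mathcal{L}-D)$, so by Lemma 3.2(v) it is automatically an idempotent. Next, for $E_2 = \begin{bmatrix} U & V \\ C & D \end{bmatrix}$ with $U=I_\mathcal{K}-C^{-1}DC$ and $V=C^{-1}(I_\mathcal{L}-D)D$, I would check the four identities of Lemma 3.2(i). The bottom-row identities are immediate via $CC^{-1}=I_\mathcal{L}$: $CU+DC=(I_\mathcal{L}-D)C+DC=C$ and $CV+D^2=(I_\mathcal{L}-D)D+D^2=D$. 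The top-row identities $U^2+VC=U$ and $UV+VD=V$ reduce, after using $CC^{-1}=I_\mathcal{L}$ and the fact that $D$ and $I_\mathcal{L}-D$ commute, to polynomial identities in $D$ that simplify cleanly. Finally $E_1E_2=T$ is a direct block multiplication whose $(1,1)$ entry is $I_\mathcal{K}-C^{-1}DC+BC=T_1$ (using $BC=T_1-I_\mathcal{K}+C^{-1}DC$) and whose $(1,2)$ entry is $C^{-1}(I_\mathcal{L}-D)D+BD=T_1C^{-1}D=T_2$ (invoking the consistency relation $T_1C^{-1}=T_2D^{-1}$).

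For (iii), the key observation is that any invertible $W\in\mathcal{B}(\mathcal{L})$ yields a new valid pair $(WC, WD)$, since $T_1(WC)^{-1} = T_1 C^{-1}W^{-1} = T_2 D^{-1}W^{-1} = T_2(WD)^{-1}$. Restricting to scalars $W = \lambda I_\mathcal{L}$ with $\lambda \in \mathbb{C}^{*}$, one computes the $(1,2)$ entry of $E_1$ corresponding to $(\lambda C, \lambda D)$ to be $B_\lambda = \lambda^{-1}(T_1 - I_\mathcal{K})C^{-1} + C^{-1}D$. Since $T_1\ne I_\mathcal{K}$ and $C^{-1}$ is a bijection, $(T_1-I_\mathcal{K})C^{-1}\ne 0$, so distinct $\lambda$ give distinct $B_\lambda$, hence uncountably many distinct factorizations. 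The assumption $\dim\mathcal{L}\ge 2$ further enlarges the picture by supplying a genuinely non-abelian group of invertible $W\in\mathcal{B}(\mathcal{L})$ that may be used in place of scalars.

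\textbf{Main obstacle.} The only genuine work lies in (ii): verifying $E_2^2=E_2$ and $E_1E_2=T$ requires tracking cancellations that depend on the consistency identity $T_1C^{-1}=T_2D^{-1}$ together with $CC^{-1}=I_\mathcal{L}$, and the calculation must be organized carefully so that cross-terms collapse rather than proliferate. Everything else is either formal (part (i)) or a matter of parameterizing a family (part (iii)).
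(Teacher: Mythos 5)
Your proof is correct, and it follows the same route as the paper in substance, though it is more self-contained in the details. The paper's own proof of this theorem consists of the single line ``in view of Example 4.11 we only have to prove (iii)'': parts (i) and (ii) are obtained there by running the consistency-condition machinery of Theorem 4.5 (reducing equations (4.1$'$)--(4.4$'$) to ($\gamma$), ($\delta$) when $C,D$ are invertible and extracting $T_1C^{-1}=T_2D^{-1}$ as the compatibility condition on $B$), whereas you verify directly, via Lemma 3.2(i) and block multiplication, that $E_2$ is idempotent and $E_1E_2=T$ --- the same computation, just organized from scratch; your checks of $CU+DC=C$, $CV+D^2=D$, $U^2+VC=U$, $UV+VD=V$ and of the $(1,1)$, $(1,2)$ entries of $E_1E_2$ all go through. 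For (iii), your replacement $(C,D)\mapsto(WC,WD)$ with $W$ invertible is exactly the paper's $(C,D)\mapsto(D'C,D'D)$; your specialization to scalars $W=\lambda I_\mathcal{L}$ and the computation $B_\lambda=\lambda^{-1}(T_1-I_\mathcal{K})C^{-1}+C^{-1}D$ actually adds something the paper leaves implicit, namely an explicit argument that the resulting factorizations are pairwise distinct (using $T_1\ne I_\mathcal{K}$ and surjectivity of $C^{-1}$). One small slip in (i): $D^{-1}C$ maps $\mathcal{K}$ to $\mathcal{L}$ and $C^{-1}D$ maps $\mathcal{L}$ to $\mathcal{K}$, not $\mathcal{K}\to\mathcal{K}$ and $\mathcal{L}\to\mathcal{L}$ as you wrote; this is only a labelling error, since all you use is that they are bijections, so $\mathcal{R}(T_2D^{-1}C)=\mathcal{R}(T_2)$ and $\mathcal{R}(T_1C^{-1}D)=\mathcal{R}(T_1)$, giving $\mathcal{R}(T_1)=\mathcal{R}(T_2)$ as claimed.
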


\begin{proof}
	In view of Example \ref{eg:16} above, we 
	have only to prove (iii). Indeed there 
	exist uncountably many invertible
	$D^\prime\in\mathcal{B}(\mathcal{L})$. We 
	may put $D^{\prime\prime}=D^\prime D$, 
	$C^{\prime\prime}=D^\prime C$. Then, $T_1C^{\prime\prime^{-1}}=T_1C^{-1}D^{\prime^{-1}}=T_2D^{-1}D^{\prime^{-1}}=T_2D^{\prime\prime^{-1}}$. So, the idempotents corresponding to ($C^{\prime\prime},D^{\prime\prime}$) in place of $(C,D)$ do have their product as $T$ only.
\end{proof}

We conclude the discussion of operators expressible as product of two idempotents even though some more cases remain to be seen.

\section*{Acknowledgment}
Authors thank Mr. Umashankara Kelathaya for his unusual help in putting the manuscript in tex form.

\bibliographystyle{amsalpha}

%
%
%

\end{document}